\documentclass[a4 paper, 10pt]{article}
\usepackage{amsmath}
\usepackage{latexsym}
\usepackage{amsfonts}
\usepackage{amssymb}
\usepackage{amsthm}
\usepackage{amsbsy}
\usepackage{graphicx}
\usepackage{bbm}
\usepackage{color}
\usepackage{epstopdf}





\newtheorem{theorem}{Theorem}[section]
\newtheorem{lemma}{Lemma}[section]

\newtheorem{definition}{Definition}[section]

\newtheorem{corollary}{Corollary}[section]
\newtheorem{remark}{Remark}[section]

\newtheorem{example}{Example}[section]

\begin{document}

\title{\Large \bf  A stochastic representation for the solution of approximated mean curvature flow
}

\author{
Raffaele Grande 
\thanks{Cardiff School of Mathematics, Cardiff University, Cardiff, UK, e-mail: GrandeR@cardiff.ac.uk}
}

\maketitle

\begin{abstract}
\noindent
The evolution by horizontal mean curvature flow (HMCF) is a partial differential equation in a sub-Riemannian setting with application in IT and neurogeometry (see \cite{cisar}). Unfortunately this equation is difficult to study, since the horizontal normal is not always well defined. To overcome this problem the Riemannian approximation was introduced. 
In this article we define a stochastic representation of the solution of the approximated Riemannian mean curvature using the Riemannian approximation and we will prove that it is a solution in the viscosity sense of the approximated mean curvature flow, generalizing the result of \cite{dirr}.
\end{abstract}

\section{Introduction}
The evolution by mean curvature flow  (MCF) has been studied extensively and it has many applications in image processing and neurogeometry (see e.g. \cite{cisar}).
We say that a hypersurface evolves by MCF if it contracts in the normal direction with normal velocity proportional to its mean curvature  see e.g. \cite{Ecker} for  further details.
It is well-known that this evolution may develop singularities in finite time in the Euclidean and Riemannian setting (as in the case of the dumbbell, see \cite{Ecker} for further details). To deal with such a singularities, many generalised approaches to study this evolution have been developed. In particular in 1991, Chen, Giga and Goto \cite{che} and, independently  Evans and Spruck  \cite{ev} introduced the so called level set approach, which consists in studying the evolving hypersurfaces as  level sets of (viscosity) solutions of suitable associated nonlinear PDEs.
In this paper we are interested in a degenerate version of such an evolution, namely evolution by horizontal mean curvature flow (HMCF) and its Riemannian approximation: we consider a hypersurface embedded in a sub-Riemannian structure (Carnot-type geometry), then the evolution contracts in the direction of the so called horizontal normal proportionally to its horizontal curvature (see Section 2 for details). We consider the level set approach which is now associated to a parabolic PDE far more degenerate than in the standard case.

\vspace{0.5 cm}

This approach was developed by Cardaliaguet, Quincampoix and Buckdahn in \cite{car} and contemporaneously but independently by Soner and Touzi \cite{tou2}  for the standard (Euclidean) case and generalised then by Dirr, Dragoni and von Renesse in \cite{dirr} for the  case of HMCF, that there is a connection between this equation and a suitable stochastic optimal control problem. In the Euclidean setting the dynamic can be expressed using the definition of the It\^{o} integral while in the sub-Riemannian case we have to use the definition of the Stratonovich integral. Roughly speaking, in the last case the dynamic is far more complex because we have a deterministic part (related to first order derivatives induced by the chosen geometry) and a stochastic one (related to some second order derivatives induced by the chosen geometry). However, as in the case of the Heisenberg group, sometimes it is possible to find some simplification of this dynamic as remarked in \cite{gra}, making the dynamic similar to the Euclidean one.
It is well known that it is possible to generalize this equation using a Riemannian approximation, as e.g. in \cite{cisar}.

\vspace{0.5 cm}
The aim of the chapter is to find a stochastic representation of the viscosity solution of approximated Riemannian mean curvature flow, generalizing the result obtained by \cite{dirr}. The chapter is organised as follows: in Section 2 we define some preliminary concepts, in Section 3 we introduce the horizontal mean curvature flow and in Section 4 we approximate it using a Riemannian approximation and in Section 5 we will find a stochastic representation of the solution of approximated mean curvature flow.
\section{Preliminaries} \label{pre}
We now briefly recall some basic geometrical definitions which will be key for defining the evolution by HMCF  (for further details see Chapter 1).
For more definitions and properties on sub-Riemannian geometries we refer to \cite{mont} and also \cite{BLU} for the particular case of Carnot groups.
\begin{definition}
Let $M$ be a $N$-dimensional smooth manifold, we can define for every point $p$ a subspace of $T_{x} M$ called $\mathcal{H}_{x}$. We define the \emph{distribution} as $\mathcal{H} = \{ (x,v) | \ p \in M, \  v \in \mathcal{H}_{x} \}$.
\end{definition}
\begin{definition}
Let $M$ be a manifold and $X$,$Y$ two vector fields defined on this manifold and $f:M \rightarrow \mathbb{R}$ a smooth function, then we define the \emph{(Lie) bracket} between $X$ and $Y$ as $[X,Y](f)= XY(f) - YX(f)$.\\
Let us consider $\mathcal{X} = \{ X_{1}, \dots , X_{m} \}$ spanning some distribution $\mathcal{H} \subset TM$, we define the k-bracket as $\mathcal{L}^{(k)} = \{ [X,Y] | X \in \mathcal{L}^{(k-1)} \ \ Y \in \mathcal{L}^{(1)} \}$ with $i_{j} \in \{1, \dots, m \}$ and $\mathcal{L}^{(1)} = \mathcal{X}$.
The associated Lie algebra is the set of all brackets between the vector fields of the family
\begin{equation*}
\mathcal{L}(\mathcal{X}) := \{ [X_{i} , X_{j}^{(k)} ] | X^{(k)}_{j} \ \mbox{k-length bracket of $X_{1}, \dots X_{m}$} \ k \in \mathbb{N} \}.
\end{equation*}
\end{definition}
The definition of H\"{o}rmander condition is crucial in order to work with PDEs in sub-Riemannian setting, because it allows us to recover the whole tangent space for every point.
\begin{definition} [H\"{o}rmander condition]
Let $M$ be a smooth manifold and $\mathcal{H}$ a distribution defined on $M$. We say that the distribution is \emph{bracket generating}   if and only if, at any point, the Lie algebra $\mathcal{L}(\mathcal{X})$ spans the whole tangent space.
We say that a sub-Riemannian geometry satisfies the \emph{H\"{o}rmander condition} if and only if the associated distribution is bracket generating.
\end{definition}
\begin{definition}
Let $M$ be a smooth manifold and $\mathcal{H}=span \{ X_{1}, \dots , X_{m} \} \subset TM$ a distribution and $g$ a Riemannian metric of $M$ defined on the subbundle $\mathcal{H}$. A \emph{sub-Riemannian geometry} is the triple $(M, \mathcal{H} , g)$.
\end{definition}
\begin{definition}
Let $(M, \mathcal{H}, g)$ be a sub-Riemannian geometry and $\gamma:[0,T] \rightarrow M$ an absolutely continuous curve, we say that $\gamma$ is an \emph{horizontal curve} if and only if
\begin{equation*}
\dot{\gamma}(t) \in \mathcal{H}_{\gamma(t)}, \ \ \mbox{for a.e.} \ t \in [0,T],
\end{equation*}
or, equivalently, if there exists a measurable function   $h : [0,T] \rightarrow \mathbb{R}^{N}$ such that
\begin{equation*}
\dot{\gamma}(t) = \sum_{i=1}^{m} h_{i}(t) X_{i}(\gamma(t)), \ \ \mbox{for a.e.} \ t \in [0,T],
\end{equation*}
where $h(t)=(h_{1}(t), \dots  , h_{m}(t))$ and $X_{1}, \dots X_{m}$ are some vector fields spanning the distribution $\mathcal{H}$.
\end{definition}
\begin{example}[The Heisenberg group] \label{a2n}
The most significant  sub-Riemannian geometry is the so called Heisenberg group.  For a formal definition of the Heisenberg group and the connection between its structure as non commutative Lie group and its manifold structure we refer to \cite{BLU}.
Here we simply introduce the 1-dimensional  Heisenberg group as the sub-Riemannian structure induced  on  $\mathbb{R}^{3}$ 
by the vector fields
\begin{equation*}
\label{HeisVectorFields}
X_1(x)= \begin{pmatrix}
1\\ 0 \\ - \frac{x_{2}}{2}
\end{pmatrix}
\quad  \textrm{and} \quad
X_2=
\begin{pmatrix}
0\\1 \\ \frac{x_{1}}{2}
\end{pmatrix},
\quad \forall \; x=(x_1,x_2,x_3)\in \mathbb{R}^3.
\end{equation*} 

We observe that the associated matrix is given by\begin{equation}\sigma(x) = \begin{bmatrix} 1 & 0 & - \frac{x_{2}}{2} \\ 0 & 1 & \frac{x_{1}}{2} \end{bmatrix}.\end{equation}
The introduced vector fields satisfy the H\"ormander condition with step 2: in fact $[X_1,X_2](x)=\begin{pmatrix}0\\0\\1\end{pmatrix}$
for any $x\in \mathbb{R}^3$.

\end{example}

The H\"{o}rmander condition is crucial to state the following theorem.

\begin{theorem}(\cite{mont})[Chow] Let $M$  be a smooth manifold and $\mathcal{H}$ a bracket generating distribution defined on $M$.  If $M$ is connected, then there exists a horizontal curve joining any two given points of $M$.\end{theorem}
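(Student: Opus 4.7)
The plan is to prove Chow's theorem by the classical reachable-set argument. I would define on $M$ the equivalence relation $p\sim q$ if and only if there exists a horizontal curve connecting $p$ to $q$, and let $\mathcal{O}_p$ denote the resulting equivalence class (orbit) of $p$. Concatenations and time-reversals of horizontal curves are horizontal (note that $-X_i\in\mathcal{H}$ whenever $X_i\in\mathcal{H}$, so traversing a curve backwards keeps velocities in the distribution), so $\sim$ is genuinely an equivalence relation. The reduction is then standard: if every orbit is open, the complement $M\setminus\mathcal{O}_p$ is a union of other orbits, hence open, so each orbit is also closed; connectedness of $M$ forces $\mathcal{O}_p=M$, which is the claim.

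The content is therefore to show that $\mathcal{O}_p$ is open. I would fix $p\in M$ and a local generating family $X_1,\dots,X_m$ of $\mathcal{H}$ near $p$. Denoting by $\Phi_X^{t}$ the flow of a vector field $X$, integral curves of $\pm X_i$ are horizontal and so are their concatenations; hence for any indices $i_1,\dots,i_k$ and small times $t_1,\dots,t_k$, the point
$$\Phi_{X_{i_k}}^{t_k}\circ\cdots\circ\Phi_{X_{i_1}}^{t_1}(p)$$
belongs to $\mathcal{O}_p$. The next step is to convert the bracket-generating hypothesis into actual motions: the classical commutator identity
$$\Phi_Y^{-s}\circ\Phi_X^{-s}\circ\Phi_Y^{s}\circ\Phi_X^{s}(q)=q+s^{2}[X,Y](q)+o(s^{2})$$
shows, after the rescaling $s=\sqrt{t}$, that a composition of horizontal flows produces a displacement of order $t$ along $[X,Y]$. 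Iterating this trick on nested commutators yields, by induction on bracket length, motions along any iterated bracket of the $X_i$.

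Having these building blocks, I would select from the iterated brackets a basis $v_1,\dots,v_n$ of $T_pM$ (possible by the bracket generating condition, with $n=\dim M$) and assemble a smooth map
$$F\colon U\subset\mathbb{R}^n\longrightarrow M,\qquad F(0)=p,$$
built as a suitable composition of (rescaled) flows of the $X_i$, whose differential $dF_0$ sends the standard basis of $\mathbb{R}^n$ to $v_1,\dots,v_n$ and is therefore invertible. By the inverse function theorem $F$ is a local diffeomorphism onto an open neighborhood of $p$, and by construction $F(U)\subset\mathcal{O}_p$, so $\mathcal{O}_p$ is open, which closes the argument.

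The main obstacle will be the inductive construction of $F$: one has to verify that higher-order brackets can be realized as the leading-order displacement of a suitably chosen, rescaled composition of the elementary flows, and that all lower-order error terms can either be cancelled by tuning intermediate parameters or absorbed into the inverse function theorem estimate. This is the technical heart of the theorem and is worked out in full detail in the standard sub-Riemannian references such as \cite{mont}, to which I would ultimately appeal.
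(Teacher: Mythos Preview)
The paper does not prove this theorem: it is stated as a classical result and attributed to \cite{mont}, with no argument given. Your sketch is the standard reachable-set/orbit argument and is correct as an outline; in particular, the reduction to openness of $\mathcal{O}_p$ via connectedness, and the use of commutators of flows to realise iterated brackets, is exactly the approach developed in Montgomery's book. Since the paper itself defers entirely to that reference, your proposal is in fact \emph{more} detailed than what the paper contains, and there is nothing further to compare.
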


\subsection{Carnot type geometries}
From now on we consider only the case where the starting topological manifold $M$ is the Euclidean $\mathbb{R}^N$. Moreover, in this paper we will concentrate on sub-Riemannian geometries with a particular structure: the so called Carnot-type geometries.
\begin{definition}
Let us consider $(M, \mathcal{H}, g)$ a sub-Riemannian geometry. We say that $X_{1}, \dots , X_{m}$, $m<N$, are Carnot-type vector fields if the coefficients of $X_{i}$ are 0 for $j \in \{ 1, \dots , m \} \setminus \{ i \}$, the $i$-component is equal to 1 and the other $m-N$ components are polynomial in $x$.
\end{definition}
The previous structure allows us to consider an easy and explicit Riemannian approximation. Nevertheless the approach apply also to the case where this additional structure is not fulfilled.
This structure applies to a large class of geometries. The Heisenberg group introduced in Example \ref{a2n} is obviously a Carnot-type geometry.
Carnot groups (see \cite{BLU} for definitions and properties) are a very important class of  sub-Riemannian geometries with in  addition a non commutative Lie group structure associated.


For later use we also introduce the matrix associated to the vector fields $X_{1} , \dots , X_{m}$, which is the
$N\times m$ matrix defined as
\begin{equation*}
\label{sigma} 
\sigma(x)=[X_{1}(x), \dots , X_{m}(x)]^{T}.
\end{equation*}
\begin{example}
In the case of the Heisenberg group  introduced in Example \ref{a2n}, the matrix $\sigma$ is given by
\begin{equation*}
\sigma(x)= \begin{bmatrix} 1 & 0 & - \frac{x_{2}}{2} \\ 0 & 1 & \frac{x_{1}}{2} \end{bmatrix},
\quad
\forall \, x=(x_{1},x_{2},x_{3}) \in \mathbb{R}^{3}.
\end{equation*}
\end{example}
In general, for Carnot-type geometries, the matrix $\sigma$ assumes the following structure:
\begin{equation}
\label{matrixCarnot-type1s}
\sigma(x) = \begin{bmatrix} I_{m \times m} &  A(x_{1}, \dots x_{m}) \end{bmatrix}
\end{equation}
where the matrix $A(x_{1}, \dots , x_{m})$ is a $(N-m) \times m$  matrix depending only on the first $m$ components of $x$.\\

We now want to introduce the Riemannian approximation, which will be crucial for our results.\\
Let  us consider a distribution $\mathcal{H}$ spanned by the  Carnot-type vector fields $\{ X_{1}, \dots , X_{m} \}$ defined on $\mathbb{R}^{N}$ with $m<N$ and satisfying the H\"ormander condition. 
It is possible to complete the distribution $\mathcal{H}$ 
by adding  $N-m$ vector fields $ X_{m+1} , \dots, X_{N}$ in order to construct an orthonormal basis for all $x \in \mathbb{R}^{N}$, i.e. 
$$\textrm{Span} \big( X_{1}(x) , \dots , X_{m}(x),X_{m+1}(x) , \dots, X_{N}(x)\big)=T_x\mathbb{R}^N\equiv\mathbb{R}^N,
\;\forall\, x\in \mathbb{R}^N.$$
The geometry induced,   for all  $\varepsilon > 0$,
by the distribution $$\mathcal{H}_\varepsilon (x)= span \{ X_{1}(x) , \dots , X_{m}(x),
 \varepsilon X_{m+1}(x) , \dots , \varepsilon X_{N}(x)  \}, \ \ \mbox{$ \forall x \in \mathbb{R}^{N}$}$$
is called \emph{Riemannian approximation} of our starting sub-Riemannian topology.
 The associated matrix is now
\begin{equation} \label{aaaft}
\sigma_{\varepsilon}(x) = [ X_{1}(x), \dots X_{m}(x) , \varepsilon X_{m+1}(x) \dots , \varepsilon X_{N}(x)]^{T}.
\end{equation}
Note that $\det( \sigma_{\varepsilon}(x)) \neq 0$.\\
  Note that, in the case of Carnot-type geometries, we can always choose 
 $$
 X_i(x)=e_i, \quad
 \forall i=m+1,\dots,N 
  \quad
 \forall x\in \mathbb{R}^N,
$$
 where by $e_i$ we indicate the standard Euclidean unit vector with 1 at the $i$-th component.

\begin{example}[Riemannian approximation of $\mathbb{H}^{1}$]
In the case of the Heisenberg group  introduced in Example \ref{a2n}, the matrix associated to the 
 Riemannian approximation  is for every point $x=(x_1,x_2,x_3)$ given by
\begin{equation*}
\sigma_{\varepsilon}(x)= \begin{bmatrix} 1 & 0 & - \frac{x_{2}}{2} \\ 0 & 1 & \frac{x_{1}}{2} \\ 0 & 0 & \varepsilon \end{bmatrix}.
\end{equation*}
\end{example}
This technique is called Riemannian approximation since, as  $\varepsilon \rightarrow 0^+$, then the geometry induced by Riemannian approximation converges, in sense of Gromov-Hausdorff (see \cite{gro} for further details), to the original sub-Riemannian geometry (as shown, as example, in \cite{cisar}).

\section{Horizontal mean curvature evolution}

Given a smooth hypersurface $\Gamma$, we indicate by $n_E(x)$ the standard (Euclidean) normal to the hypersurface $\Gamma$ at the point $x$. Since the family of vector fields $ \mathcal{X}_{\varepsilon}= \{X_{1} , \dots , X_{m}, \varepsilon X_{m+1} , \dots, \varepsilon X_{N}\}$ span the whole of $\mathbb{R}^N$ at any point of $\Gamma$, then $n_E(x)$ can be written w.r.t. such a basis, i.e.
$$
n_E(x)=\frac{\sum_{i=1}^N\alpha_i(x)X^{\varepsilon}_i(x)}
{\sqrt{\sum_{i=1}^N\alpha^2_i(x)}}.
$$
where $X^{\varepsilon}_i$ are the elements of $\mathcal{X}_{\varepsilon}$.
The following definitions will be key for this paper  (see Chapter 4 for further details).
\begin{definition} \label{c6d10000}
Given a smooth hypersurface $\Gamma$, the \emph{horizontal normal} is the renormalized projection of the Euclidean normal on the horizontal space $\mathcal{H}_x$, i.e.
\begin{equation*}
n_{0}(x):= \frac{ \alpha_1(x)X_1(x)+\dots \alpha_m(x)X_m(x)}{\sqrt{\alpha^2_1(x)+\dots +\alpha^2_m(x)}}\in \mathcal{H}_x \subset \mathbb{R}^N.
\end{equation*}
With an abuse of notation we will often indicate by $n_{0}(x)$ the associated $m$-valued vector 
\begin{equation}
\label{normal_coordinateVector}
n_{0}(x)= \frac{ (\alpha_1(x), \dots , \alpha_m(x))}{\sqrt{\alpha^2_1(x)+\dots +\alpha^2_m(x)}}\in \mathbb{R}^m.
\end{equation}
\end{definition}
The main difference between the horizontal normal and a standard normal is that the first may not exist even for smooth hypersurfaces. In fact at some points the horizontal normal is not defined while the Euclidean one exists. These points are called \emph{characteristic points}.
\begin{definition}\label{CharacteristicPoints}
 Given a smooth hypersurface $\Gamma$, characteristic points  occur whenever $n_E(x)$ is orthogonal to the horizontal plane $\mathcal{H}_x$, then its projection on such a subspace vanishes, i.e.
 $$
 \alpha^2_1(x)+\dots +\alpha^2_m(x)=0.
 $$
\end{definition}
Note that these points do not exist in the associated Riemannian approximation, in fact whenever $\Gamma$ is smooth the normal is defined at any point, which means
$$
\sum_{i=1}^N\alpha_i^2(x)\neq 0, \quad \forall x\in \Gamma.
$$

We recall that, for every smooth hypersurface, the \emph{mean curvature} at the point $x \in \Gamma$  is defined as the Euclidean divergence of the Euclidean normal at that point.
Similarly, for every smooth hypersurface, we can now introduce the  horizontal mean curvature.
\begin{definition}\label{c6glelw}
\label{horizontalMCF}
Given a smooth hypersurface $\Gamma$ and a non characteristic point $x\in \Gamma$, the 
 \emph{horizontal mean curvature} is defined as the horizontal divergence of the horizontal normal, i.e.
$
k_{0}(x) = div_{\mathcal{H}} n_{0}(x),
$
where $n_0(x) $ is the m-valued vector associated to the horizontal normal (see \eqref{normal_coordinateVector}) while $div_{\mathcal{H}} $ is the divergence w.r.t. the vector fields $X_1,\dots, X_m$, i.e.
\begin{equation*}
k_{0}(x) = X_1\left(\frac{\alpha_1(x)}{\sqrt{\sum_{i=1}^m\alpha_i^2(x)}}\right)+
\dots+
X_m\left(\frac{\alpha_m(x)}{\sqrt{\sum_{i=1}^m\alpha_i^2(x)}}\right).
\end{equation*}
\end{definition}
Obviously the horizontal mean curvature is never defined at characteristic points, since there the  horizontal normal does not exist.

\begin{definition} \label{c6glo99}
Let $\Gamma_t$ be a family of smooth hypersurfaces in $\mathbb{R}^N$. 
We say that $\Gamma_{t}$ is an \emph{evolution by horizontal mean curvature flow} 
of $\Gamma$ if and only if $\Gamma_0=\Gamma$ and for any smooth horizontal  curve $\gamma: [0,T] \rightarrow \mathbb{R}^{N}$ such that $\gamma(t) \in \Gamma_{t}$ for all $t \in [0,T]$, 
the horizontal normal velocity $v_{0}$ is equal to minus the horizontal mean curvature, i.e.
\begin{equation} \label{sla}
v_{0}(\gamma(t)): = - k_{0}(\gamma(t)) n_0(\gamma(t)),
\end{equation}
where $n_{0}(\gamma(t))$ and $k_{0}(\gamma(t))$ as respectively the horizontal normal and the horizontal mean curvature defined by Definitions \ref{c6d10000} and \ref{c6glelw} at the point $\gamma(t)$.
\end{definition}
Note that Definition \ref{c6glo99} is never defined at characteristic points.

In this subsection we consider a level set manifold $\Gamma$ which is smooth. We now compute the horizontal normal and the horizontal curvature for  smooth hypersurface expressed as zero level set, i.e.
$$
\Gamma= \big\{x \in \mathbb{R}^{N}| u(x)= 0 \big\},
$$ 
for some smooth function $u:\mathbb{R}^N\to \mathbb{R}$. Then 
 the Euclidean normal is simply
$n_E(x)= \frac{\nabla u(x)}{| \nabla u(x) |}$, which implies that the horizontal normal can be expressed as
\begin{equation} \label{c6wi99}
n_{0}(x)= \left( \frac{X_{1} u (x)}{\sqrt{\sum_{i=1}^{m} (X_{i} u(x))^2}} , \dots , \frac{X_{m} u (x)}{\sqrt{\sum_{i=1}^{m} (X_{i} u(x))^2}} \right).
\end{equation}
Note that $(X_1u,\dots , X_mu)\in \mathbb{R}^m$ is the so called horizontal gradient. \\
Similarly we can then write the horizontal mean curvature as 

\begin{equation} \label{c6cru99}
k_{0}(x)= \sum_{i=1}^{m} X_{i} \left( \frac{X_{i}u (x)}{\sqrt{\sum_{i=1}^{m} (X_{i} u(x))^2}} \right) .
\end{equation}

Let $\Gamma_{t} = \{ (x,t) | u(x,t) = 0 \}$ where $u$ is $C^{2}$. Applying \eqref{c6wi99} and \eqref{c6cru99} to the Definition \ref{c6glo99} we obtain that $u$ solves the following PDE, which is 
\begin{equation} \label{horMCF5}
u_{t} = Tr( (\mathcal{X}^{2}u)^{*}) - \bigg< (\mathcal{X}^{2}u)^{*} \frac{\mathcal{X} u}{|\mathcal{X} u|} , \frac{\mathcal{X} u}{|\mathcal{X} u|} \bigg>
\end{equation}
where $\mathcal{X}u$ is the so called horizontal gradient, that is 
\begin{equation*}
\mathcal{X}u:= (X_{1}u , \dots , X_{m}u)^{T}
\end{equation*}
and $(\mathcal{X}^{2}u)^{*}$  is the symmetric horizontal Hessian, that is
\begin{equation*}
((\mathcal{X}^{2} u)^{*})_{ij}:= \frac{X_{i}(X_{j}u) + X_{j}(X_{i}u)}{2}.
\end{equation*}

We consider the equation found by Dirr, Dragoni and Von Renesse in \cite{dirr}
\begin{align}\label{elso}
F(x,p,S) & =  - Tr( \sigma(x) S \sigma^{T} (x) + A(x,p)) \nonumber \\ &+ \left< \left( \sigma(x) S \sigma^{T}(x) + A(x,p) \right) \frac{\sigma(x) p}{| \sigma(x) p|} , \frac{\sigma (x) p}{|\sigma(x) p|} \right>
\end{align}
where
\begin{equation*}
A(x,p) = \frac{1}{2}  < \nabla_{X_{i}}X_{j}(x) + \nabla_{X_{j}}X_{i}(x), p>.
\end{equation*}
We observe that the equation $F(x,p,S)$ is well defined and continuous outside the characteristic points and we define $\mathcal{V}= \{(x,p) \in \Gamma \times T_{x}\Gamma | \  \sigma(x)p=0 \}$. In this way we observe that 
\begin{equation*}
F: (\mathbb{R}^{2N} \setminus \mathcal{V}) \times Sym(N) \rightarrow \mathbb{R}.
\end{equation*}
We remark that the function $F$ has some points in which is discontinuous, As consequence, in order to work with viscosity solutions, we have to compute the upper and lower envelops of this function (for further details about envelopes see Chapter 3).
\begin{definition} \label{c6rem}
Let us consider a locally bounded function $u: \mathbb{R} \times [0,T] \rightarrow \mathbb{R}$. 
\begin{itemize}
\item The \emph{upper semicontinuous envelope} is defined as
{\small \begin{align*}
u^{*}(t,x) \! := \! \inf \{ v(t, x)|\ v \ \mbox{cont. and \ } v \geq u \} \! = \! \limsup_{r \rightarrow 0^{+}} \{ u(s, y)|  |y - x| \leq  r, |t - s| \leq r \}.
\end{align*}}
\item The \emph{lower semicontinuous envelope} is defined as
{\small \begin{align*}
u_{*}(t,x) &:= \sup \{ u(t, x)|\ u \ \mbox{cont. and \ } v \leq u \} \!  = \! \liminf_{r \rightarrow 0^{+}} \{ u(s, y)| |y - x| \leq  r, |t - s| \leq r \}.
\end{align*}}
\end{itemize}
\end{definition}

\begin{remark}
If the function $u: \mathbb{R}^{N} \times [0,T] \rightarrow \mathbb{R}$ is continuous then it holds true
\begin{equation*}
u_{*}(t,x) = u(t,x) = u^{*}(t,x), \ \  \mbox{for all} \ \ \ (t,x) \in [0,T] \times \mathbb{R}^{N}.
\end{equation*}
\end{remark}
\begin{remark}
Applying the Definition \ref{c6rem} to the function $F$ as defined in \eqref{elso} we obtain
\begin{equation*}
F^{*}(x,p,S)= \begin{cases} -Tr(\overline{S}) + \left< \overline{S} \frac{\sigma(x) p}{|\sigma(x) p|} , \frac{\sigma(x) p}{| \sigma(x) p|} \right>, \ \ | \sigma(x) p| \neq 0, \\ -Tr(\overline{S}) + \lambda_{max}(\overline{S}), \ \ \ \ \ \ \ \ \ \ \ \ \  \  | \sigma(x) p| = 0 \end{cases}
\end{equation*}
and 
\begin{equation}
F_{*}(x,p,S)= \begin{cases} -Tr(\overline{S}) + \left< \overline{S} \frac{\sigma(x) p}{|\sigma(x) p|} , \frac{\sigma(x) p}{| \sigma(x) p|} \right>, \ \ |\sigma(x) p| \neq 0, \nonumber \\ -Tr(\overline{S}) + \lambda_{min}(\overline{S}), \ \ \ \ \ \ \ \ \ \ \ \ \ \  | \sigma (x) p| = 0 \end{cases}
\end{equation}
where $\overline{S} = \sigma(x) S \sigma^{T}(x) + A(x,p)$ with $\lambda_{max}$ and $\lambda_{min}$ the maximum and the minimum eigenvalues of the matrix $\overline{S}$.
\end{remark}

\section{Approximated Riemannian mean curvature flow}
The Equation \eqref{horMCF5} can be approximated to a Riemannian mean curvature flow using the Riemannian approximation (as seen in Chapter 1). This leads the following generalizations of the definitions of horizontal normal and horizontal divergence (see Chapter 4 for further details).
\begin{definition} \label{c6nap1}
Given a smooth hypersurface $\Gamma$, the \emph{approximated Riemannian normal} is the renormalized projection of the Euclidean normal on the horizontal space $\mathcal{H}^{\varepsilon}_x$, i.e.
\begin{equation*}
n_{\varepsilon}(x):= \frac{ \sum_{i=1}^{m}\alpha_i(x)X_i(x)+ \varepsilon \sum_{i=m+1}^{N} \alpha_{i}(x) X_{i}(x)}{\sqrt{\alpha^2_1(x)+\dots +\alpha^2_m(x) + \varepsilon^{2} \alpha^2_{m+1}(x) + \dots + \varepsilon^{2} \alpha^{2}_{N}(x)}}\in \mathcal{H}_x\subset \mathbb{R}^N.
\end{equation*}
With an abuse of notation, we will often indicate by $n_{\varepsilon}(x)$ the associated $N$-valued vector 
\begin{equation}
\label{normal_coordinateVector99}
n_{\varepsilon}(x)= \frac{( \alpha_{1}(x), \dots , \alpha_{m}(x), \varepsilon  \alpha_{m+1}(x), \dots, \varepsilon \alpha_{N}(x))^{T}}{\sqrt{\alpha^2_1(x)+\dots +\alpha^2_m(x) + \varepsilon^{2} \alpha^{2}_{m+1}(x) + \dots + \varepsilon^{2} \alpha^{2}_{N}(x)}}\in \mathbb{R}^N.
\end{equation}

\end{definition}

\begin{definition} \label{c6dap1}
Given a smooth hypersurface $\Gamma$ and a point $x\in \Gamma$, the \emph{approximated Riemannian mean curvature} is defined as the horizontal divergence of the approximated Riemannian normal, i.e.
$
k_{\varepsilon}(x) = div_{\mathcal{H}^{\varepsilon}} n_{\varepsilon}(x),
$
where $n_{\varepsilon}(x) $ is the $N$-valued vector associated to the horizontal normal (see \eqref{normal_coordinateVector99}) while $div_{\mathcal{H}} $ is the divergence w.r.t. the vector fields $X_1,\dots, X_m, \varepsilon X_{m+1}, \dots ,\varepsilon X_{N}$, i.e.
\begin{align}
k_{\varepsilon}(x) \! = \! \sum_{i=1}^{m} X_i\left(\frac{\alpha_i(x)}{\sqrt{\sum_{j=1}^m\alpha_j^2(x) + \varepsilon^{2} \sum_{k=m+1}^{N} \alpha_{k}^{2}(x)}}\right) \nonumber \\ +
\varepsilon \! \sum_{i=m+1}^{N} \! X_i\left(\frac{\varepsilon \alpha_i(x)}{\sqrt{\sum_{j=1}^m\alpha_j^2(x) + \varepsilon^{2} \sum_{k=m+1}^{N} \alpha_{k}^{2}(x)}}\right)\! .
\end{align}
\end{definition}
\begin{remark}
In this setting we do not have characteristic points on the hypersurface $\Gamma$.
\end{remark}
We define now the approximated Riemannian mean curvature flow.
\begin{definition} \label{c6gl}
Let $\Gamma_t$ be a family of smooth hypersurfaces in $\mathbb{R}^N$. We say that $\Gamma_{t}$ is an \emph{evolution by approximated Riemannian mean curvature flow} of $\Gamma$ if and only if $\Gamma_0=\Gamma$ and for any smooth horizontal curve $\gamma_{\varepsilon}: [0,T] \rightarrow \mathbb{R}^{N}$ such that $\gamma(t) \in \Gamma_{t}$ for all $t \in [0,T]$, the horizontal normal velocity $v_{\varepsilon}$ is equal to minus the horizontal mean curvature, i.e.
\begin{equation*} \label{sla}
v_{\varepsilon}(\gamma(t)): = - k_{\varepsilon}(\gamma_{\varepsilon}(t)) n_{\varepsilon}(\gamma_{\varepsilon}(t)),
\end{equation*}
where $n_{\varepsilon}(x(t))$ and $k_{\varepsilon}(x(t))$ as respectively the horizontal normal and the horizontal mean curvature defined by Definitions \ref{c6nap1} and \ref{c6dap1}.
\end{definition}
Developing all the computations following the example of \cite{dirr} we obtain the following partial differential equation
\begin{equation} \label{hver}
u_{t} = Tr(( \mathcal{X}_{\varepsilon}^{2} u)^{*}) - \left< (\mathcal{X}^{2}_{\varepsilon} u)^{*} \frac{\mathcal{X}_{\varepsilon} u}{|\mathcal{X}_{\varepsilon} u|} , \frac{\mathcal{X}_{\varepsilon} u}{|\mathcal{X}_{\varepsilon} u|} \right> = \Delta_{\varepsilon} u - \Delta_{0, \infty, \varepsilon} u,
\end{equation}
where
\begin{equation} \label{c6smas1a}
(\mathcal{X}^{2}_{\varepsilon} u )^{*}_{ij} = \frac{X_{i}^{\varepsilon}(X_{j}^{\varepsilon} u) + X_{j}^{\varepsilon}(X_{i}^{\varepsilon}u)}{2}.
\end{equation}
We observe now that we can write the Equation \eqref{hver} as
\begin{equation} \label{b}
u_{t} + F_{\varepsilon}(x,Du, D^2 u)=0,
\end{equation}
with
\begin{align} \label{pppp}
F_{\varepsilon}(x,p,S)& = - Tr( \sigma_{\varepsilon}(x) S \sigma^{T}_{\varepsilon} (x) + A_{\varepsilon}(x,p)) \nonumber  \\ &+ \left< \left( \sigma_{\varepsilon}(x) S \sigma_{\varepsilon}^{T}(x) + A_{\varepsilon}(x,p) \right) \frac{\sigma_{\varepsilon}(x) p}{| \sigma_{\varepsilon}(x) p|} , \frac{\sigma_{\varepsilon} (x) p}{|\sigma_{\varepsilon}(x) p|} \right>
\end{align}
with
\begin{equation*}
(A_{\varepsilon})_{ij}(x,p) = \frac{1}{2} \big< \nabla_{X_{i}^{\varepsilon}} X_{j}^{\varepsilon} + \nabla_{X_{j}^{\varepsilon}} X_{i}^{\varepsilon} , p \big>.
\end{equation*}
\begin{remark}
Applying the Definition \ref{c6rem} to the function $F_{\varepsilon}$ as defined in \eqref{elso} we obtain
\begin{equation*}
F^{*}(x,p,S)= \begin{cases} -Tr(\overline{S}_{\varepsilon}) + \left< \overline{S}_{\varepsilon} \frac{\sigma_{\varepsilon}(x) p}{|\sigma_{\varepsilon}(x) p|} , \frac{\sigma_{\varepsilon}(x) p}{| \sigma_{\varepsilon}(x) p|} \right>, \ \ |p| \neq 0, \\ -Tr(\overline{S}_{\varepsilon}) + \lambda_{max}(\overline{S}_{\varepsilon}), \ \ \ \ \ \ \ \ \ \ \ \ \  \  \ \   | p| = 0 \end{cases}
\end{equation*}
and 
\begin{equation}
F_{*}(x,p,S)= \begin{cases} -Tr(\overline{S}_{\varepsilon}) + \left< \overline{S}_{\varepsilon} \frac{\sigma_{\varepsilon}(x) p}{|\sigma_{\varepsilon}(x) p|} , \frac{\sigma_{\varepsilon}(x) p}{| \sigma_{\varepsilon}(x) p|} \right>, \ \ |p| \neq 0, \nonumber \\ -Tr(\overline{S}_{\varepsilon}) + \lambda_{min}(\overline{S}_{\varepsilon}), \ \ \ \ \ \ \ \ \ \ \ \ \ \ \ \  |p| = 0 \end{cases}
\end{equation}
where $\overline{S}_{\varepsilon} = \sigma_{\varepsilon}(x) S \sigma_{\varepsilon}^{T}(x) + A_{\varepsilon}(x,p)$ with $\lambda_{max}$ and $\lambda_{min}$ the maximum and the minimum eigenvalues of the matrix $\overline{S}_{\varepsilon}$.
\end{remark}

\subsection{The approximated Riemannian stochastic control problem} \label{o}

Let us consider a family of smooth vector fields $\mathcal{X}=\{ X_{1}, \dots X_{m} \}$ and its Riemannian approximation  $\mathcal{X}_{\varepsilon}=\{ X_{1}, \dots , X_{m} , \varepsilon X_{m+1}, \dots , \varepsilon X_{N} \}$.

\begin{definition}
We define the \emph{horizontal Brownian motion} the solution of the process
\begin{equation*}
d \xi = \sum_{i=1}^{m} X_{i}( \xi) \circ dB^{i}_{m},
\end{equation*}
where $B_{m}$ is a $m$-dimensional Brownian motion, $\circ$ the Stratonovich differential and $X_{i}$ the vector fields of $\mathcal{X}$ which span the distribution $\mathcal{H}$. We define the \emph{Riemannian approximated horizontal Brownian motion} as 
\begin{equation*}
d \xi_{\varepsilon} = \sum_{i=1}^{N} X_{i}^{\varepsilon}( \xi_{\varepsilon}) \circ dB^{i}_{N}
\end{equation*}
where $B_{N}$ is an $N$-dimensional Brownian motion and $X_{i}^{\varepsilon}$ the vector fields of $\mathcal{X}_{\varepsilon}$ which span the distribution $\mathcal{H}_{\varepsilon}$.
\end{definition}

Let $(\Omega, \mathcal{F}, \{ \mathcal{F}_{t} \}_{t \geq 0}, \mathbb{P})$ be a filtered probability space, $B_{j}$ is a $j$-dimensional Brownian motion adapted to the filtration $\{ \mathcal{F}_{t} \}_{t \geq 0}$ with $j=m,N$,
we recall that a \emph{predictable} process is a time-continuous  stochastic process $\{\xi(t)\}_{t \geq 0}$ defined on the filtered probability space $(\Omega, \mathcal{F}, \{\mathcal{F}_{t}\}_{t \geq 0}, \mathbb{P})$, measurable with respect to the $\sigma$-algebra generated by all left-continuous adapted process.
Given  a smooth function $g: \mathbb{R}^{N} \rightarrow \mathbb{R}$ (which parametrizes the starting hypersurface at time $t=0$) we introduce $\xi^{t,x, \nu}$ the solution of the stochastic dynamic
\begin{equation}\label{kkkkcap5}
\begin{cases} d \xi^{t,x, \nu}(s) = \sqrt{2} \sigma^{T} ( \xi^{t,x, \nu} (s)) \circ dB_{m}^{\nu}(s),  \  \  \ \ \ s \in (t,T], \\ dB_{m}^{\nu}(s)= \nu(s)dB_{m}(s), \\ \xi^{t,x, \nu}(t) = x, \end{cases}
\end{equation}
where the matrix $\sigma$ is defined in \eqref{matrixCarnot-type1s}, $\circ$ represents the differential in the sense of Stratonovich and
\begin{equation}\label{bladra}
\mathcal{A} = \big\{ \nu: [t,T] \rightarrow Sym(m) \ \mbox{predictable} \ | \nu \geq 0 , \ I_{m} - \nu^{2} \geq 0 , \ Tr(I_{m} - \nu^{2})=1\big\}
\end{equation}
and the function $V:[0,T] \times \mathbb{R}^{N} \rightarrow \mathbb{R}$ defined as
\begin{equation} \label{c6e200cc}
V(t,x):= \inf_{\nu \in \mathcal{A}}  ess \sup_{\omega \in \Omega} g(\xi^{t,x, \nu}(T)(\omega)).
\end{equation}
Similarly, for $\varepsilon>0$ fixed, we introduce $\xi^{t,x, \nu_{1}}_{\varepsilon}$ the solution of
\begin{equation}\label{kkk1f}
\begin{cases} d \xi^{t,x, \nu_{1}}_{\varepsilon}(s) = \sqrt{2} \sigma^{T}_{\varepsilon} ( \xi^{t,x, \nu_{1}}_{\varepsilon} (s)) \circ dB_{N}^{\nu_{1}}(s), \  \  \ \ \ s \in (t,T], \\ dB_{N}^{\nu_{1}}(s) = \nu_{1}(s) dB_{N}(s), \\    \xi^{t,x, \nu_{1}}_{ \varepsilon}(t) = x, \end{cases}
\end{equation}
where $\sigma_{\varepsilon}$ is the matrix defined in \eqref{aaaft} and
\begin{equation} \label{celfudra}
\mathcal{A}_{1} = \big\{ \nu_{1}: [t,T] \rightarrow Sym(N) \ \mbox{predictable} \ | \ \nu_{1} \geq 0 , \ I_{N} - \nu^{2}_{1} \geq 0 , \ Tr(I_{N} - \nu^{2}_{1})=1 \big\}
\end{equation}
and the function  $V^{\varepsilon}: [0,T] \times \mathbb{R}^{N} \rightarrow \mathbb{R}$ defined by
\begin{equation} \label{thr5}
V^{\varepsilon}(t,x) := \inf_{\nu \in \mathcal{A}_{1}}ess \sup_{\omega \in \Omega} g( \xi^{t,x, \nu_{1}}_{\varepsilon}(T)(\omega)).
\end{equation}
It is possible to show that the function $V$ solves in the viscosity sense respectively the level-set equation for the evolution by HMCF (see \cite{dirr}).

Note also that the sets of controls \eqref{bladra} and \eqref{celfudra} can be rewritten respectively   as
\begin{equation*} 
\mathcal{A} = \{ \nu^{2}| \  \nu \in \mathcal{A} \} = Co\{ I_{m} - a \otimes a | \  a \in \mathbb{R}^{m}, \ \ |a| =1 \},
\end{equation*}
and
\begin{equation*} 
\mathcal{A}_{1} = \{ \nu_{1}^{2}| \  \nu_{1} \in \mathcal{A}_{1} \} = Co\{ I_{N} - \overline{a} \otimes \overline{a} | \  \overline{a} \in \mathbb{R}^{N}, \ \ |\overline{a}| =1 \},
\end{equation*}
see \cite{car} for more details.\\
Next we introduce the $p$-regularising approximation of the functions $V$ and $V^{\varepsilon}$.
\begin{definition}\label{nor}
For $p>1$, the  $p$-value function  associated to the value function \eqref{c6e200cc} is defined as 
\begin{equation}\label{c6odi} 
V_{p}(t,x) := \inf_{\nu \in \mathcal{A}} \mathbb{E}[ |g(\xi^{t,x, \nu})(T)(\omega)|^{p} ]^{\frac{1}{p}},
\end{equation}
and, similarly, we can introduce the following $\varepsilon$-$p$-regularising function,
that is the  $p$-value function associated to the value function \eqref{thr5},
\begin{equation}\label{c6bar}
V^{\varepsilon}_{p}(t,x) := \inf_{\nu_{1} \in \mathcal{A}} \mathbb{E}[ |g(\xi^{t,x, \nu_{1}}_{\varepsilon})(T)(\omega)|^{p} ]^{\frac{1}{p}}.
\end{equation}
\end{definition}
\begin{definition}
We define the Hamiltonian associated to the horizontal stochastic optimal control problem \eqref{kkkkcap5} the function
\begin{equation*}
H(x,p,S) = \sup_{\nu \in \mathcal{A}} \bigg[ -Tr( \sigma(x) S \sigma^{T}(x) \nu^{2}(s)) + \sum_{i,j=1}^{m}( \nu^{2}(s))_{ij} \big< \nabla_{X_{i}} X_{j}(x) , p \big> \bigg].
\end{equation*}
where $\sigma$ is defined as in \eqref{matrixCarnot-type1s}, $p \in \mathbb{R}^{N}$ and $S \in Sym(N)$.
\end{definition}
\begin{definition}
We define the Hamiltonian associated to the approximated stochastic optimal control problem \eqref{kkk1f} the function
\begin{equation*}
H_{\varepsilon}(x,p,S) = \sup_{\nu_{1} \in \mathcal{A}_{1}} \bigg[ -Tr( \sigma_{\varepsilon}(x) S \sigma_{\varepsilon}^{T}(x) \nu_{1}^{2}(s)) + \sum_{i,j=1}^{N}( \nu^{2}_{1}(s))_{ij} \big< \nabla_{X^{\varepsilon}_{i}} X^{\varepsilon}_{j}(x) , p \big> \bigg].
\end{equation*}
where  $\sigma_{\varepsilon}$ is defined as in \eqref{a2n},  $p \in \mathbb{R}^{N}$ and $S \in Sym(N)$.
\end{definition}
\begin{remark}\label{emb}
The function $V_{p}$ solves in viscosity sense PDE:
\begin{equation}
\begin{cases} -(V_{p}) + H_{p}(x,DV_{p}, D^{2}V_{p}) =0, \ \ \ t \in [0,T) , \ x \in \mathbb{R}^{N},  \\ V_{p}(T,x)=g(x), \ \ \ \ \ \ \ \  \ \ \ \ \ \  \ \ \  \  \ \ \ \ \  x \in \mathbb{R}^{N} \end{cases}
\end{equation}
where
\begin{equation} \label{hami2}
H_{p}(x,q, M)  := \sup_{\nu \in \mathcal{A}} \bigg[-(p-1) r^{-1} Tr[\nu \nu^{T} q q^{T}] + Tr[\nu \nu^{T} M]\bigg],
\end{equation}
(see \cite{car} for further details). 
\end{remark}
\begin{remark}
Similarly to Remark \ref{emb}, for $\varepsilon>0$ and $p>1$ fixed, the function $V_{p}^{\varepsilon}$ solves in the viscosity sense the PDE
\begin{equation}
\begin{cases} -(V^{\varepsilon}_{p}) + H^{\varepsilon}_{p}(x,DV^{\varepsilon}_{p}, D^{2}V^{\varepsilon}_{p}) =0, \ \ t \in [0,T) , \ x \in \mathbb{R}^{N},  \\ V^{\varepsilon}_{p}(T,x)=g(x), \ \ \ \ \ \ \ \  \ \ \ \ \ \  \ \ \  \  \ \ \ \ \  x \in \mathbb{R}^{N} \end{cases}
\end{equation}
where
\begin{equation} \label{hami21}
H_{p}^{\varepsilon}(x,r,q, M) := H_{p}(x, r , q_{\varepsilon}, M_{\varepsilon}) = \sup_{\nu \in \mathcal{A}_{1}} \bigg[-(p-1) r^{-1} Tr[\nu \nu^{T} q_{\varepsilon} q^{T}_{\varepsilon}] + Tr[\nu \nu^{T} M_{\varepsilon}]\bigg],
\end{equation}
where $\mathcal{A}_{1}$ is given in \eqref{celfudra} and, for all $q \in \mathbb{R}^{N}$ and $M=(M_{ij})_{i,j=1}^{N} \in Sym(N)$,
\begin{equation*}
q_{\varepsilon}:= \begin{bmatrix} q_{1} \\ \dots \\ q_{m} \\ \varepsilon q_{m+1} \\ \dots \\ \varepsilon q_{N} \end{bmatrix} 
\end{equation*}
and
{ \small
\begin{equation*}
M_{\varepsilon}:= \begin{bmatrix} M_{11}& \dots &  M_{1m} & M_{1 (m+1)} &\dots& \varepsilon M_{1 N}\\ & & & \vdots & & &   \\ M_{m1} & \dots & M_{mm} & \varepsilon M_{(m+1) m}& \dots & \varepsilon M_{Nm}\\ \varepsilon M_{(m+1)1} & \dots & \varepsilon M_{(m+1)m} & \varepsilon^{2} M_{(m+1)(m+1)} & \dots &  \varepsilon^{2} M_{(m+1)N} \\ & & & \vdots & & &   \\ \varepsilon M_{1N} & \dots & \varepsilon M_{mN} & \varepsilon^{2} M_{(m+1)N} & \dots & \varepsilon^{2} M_{NN}     \end{bmatrix}.
\end{equation*}}
\end{remark}
\section{$V^{\varepsilon}$ as viscosity solution}
In this section we will prove the main result of this paper, but before doing it, we have to introduce some technical lemmas.
\begin{lemma} [Comparison Principle] \label{4a}
Let us consider $0< \varepsilon<1$ fixed. Let $g_{1}$, $g_{2}$ be continuous functions on $[0,T] \times \mathbb{R}^{N}$ with $g_{1}\leq g_{2}$ and $V_{i}^{\varepsilon}(t,x)$ for $i=1,2$ as defined in \eqref{thr5} with terminal costs $g_{i}$ then it holds true
\begin{equation*}
V_{1}^{\varepsilon}(t,x) \leq V^{\varepsilon}_{2}(t,x) \ \mbox{on $[0,T] \times \mathbb{R}^{N}$}.
\end{equation*}
\end{lemma}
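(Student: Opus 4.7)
The plan is to observe that this comparison principle is a direct consequence of the monotonicity of the operations defining $V^\varepsilon_i$, since the controlled process $\xi^{t,x,\nu_1}_\varepsilon$ does not depend on the terminal cost $g_i$. So the dependence on $g_i$ factors cleanly through the inequality.

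More concretely, I would proceed as follows. Fix $(t,x) \in [0,T] \times \mathbb{R}^N$ and an arbitrary admissible control $\nu_1 \in \mathcal{A}_1$. The process $\xi^{t,x,\nu_1}_\varepsilon$ defined by \eqref{kkk1f} is the same in both problems. Since $g_1(y) \leq g_2(y)$ for every $y \in \mathbb{R}^N$, I get the pointwise inequality
\begin{equation*}
g_1\bigl(\xi^{t,x,\nu_1}_\varepsilon(T)(\omega)\bigr) \leq g_2\bigl(\xi^{t,x,\nu_1}_\varepsilon(T)(\omega)\bigr), \qquad \text{for every } \omega \in \Omega.
\end{equation*}
Taking the essential supremum over $\omega \in \Omega$ on both sides preserves the inequality (this is the only non-trivial measure-theoretic step, but it is standard: if two random variables satisfy $X\leq Y$ a.s., then $\ess\sup X \leq \ess\sup Y$), so
\begin{equation*}
\ess\sup_{\omega \in \Omega} g_1\bigl(\xi^{t,x,\nu_1}_\varepsilon(T)(\omega)\bigr) \leq \ess\sup_{\omega \in \Omega} g_2\bigl(\xi^{t,x,\nu_1}_\varepsilon(T)(\omega)\bigr).
\end{equation*}

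Finally I would take the infimum over $\nu_1 \in \mathcal{A}_1$ on both sides. Since the right-hand side dominates the left-hand side for each $\nu_1$, this infimum operation also preserves the inequality, yielding $V^\varepsilon_1(t,x) \leq V^\varepsilon_2(t,x)$ for every $(t,x)$, which is the claim.

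There is essentially no obstacle here: nothing requires PDE comparison arguments, viscosity machinery, or the specific structure of $\sigma_\varepsilon$; the statement is purely a monotonicity property of the value function. The only point that deserves a brief justification is the preservation of pointwise/a.s. inequalities by $\ess\sup$, which follows from its definition as the smallest (up to null sets) measurable majorant.
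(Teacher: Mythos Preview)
Your proof is correct and is precisely the argument the paper has in mind: the paper's own proof is the single line ``It follows from the assumption $g_1\leq g_2$ and from the properties of infimum and essential supremum,'' which you have simply unpacked in detail.
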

\begin{proof}
It follows from the assumption $g_{1} \leq g_{2}$ and from the properties of infimum and essential supremum.
\end{proof}
\begin{lemma} \label{4b}
Let us consider $0< \varepsilon<1$ fixed. Let $g$ be a bounded and uniformly continuous function on $[0,T]\times \mathbb{R}^{N}$ and let $V^{\varepsilon}(t,x)$ be defined as in \eqref{thr5} with $g$ as terminal cost. Let us consider $\phi: \mathbb{R} \rightarrow \mathbb{R}$ continuous and strictly increasing. Then
\begin{equation*}
\phi(V^{\varepsilon}_{g}(t,x)) = V^{\varepsilon}_{\phi(g)}(t,x).
\end{equation*}
\end{lemma}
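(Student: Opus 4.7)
The plan is to commute the monotone continuous map $\phi$ past both the essential supremum (over $\omega$) and the infimum (over $\nu_1 \in \mathcal{A}_1$) in the definition \eqref{thr5}. Since $\phi$ is continuous and strictly increasing, both exchanges are legitimate, and combining them yields the identity.

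First I would establish the pointwise-in-$\nu_1$ statement: for each fixed control $\nu_1 \in \mathcal{A}_1$,
\begin{equation*}
\ess\sup_{\omega \in \Omega} \phi\big(g(\xi^{t,x,\nu_1}_\varepsilon(T)(\omega))\big) \;=\; \phi\Big(\ess\sup_{\omega \in \Omega} g(\xi^{t,x,\nu_1}_\varepsilon(T)(\omega))\Big).
\end{equation*}
Call the right-hand essential supremum $M$. Since $g(\xi^{t,x,\nu_1}_\varepsilon(T)) \leq M$ almost surely and $\phi$ is non-decreasing, $\phi(g(\xi^{t,x,\nu_1}_\varepsilon(T))) \leq \phi(M)$ a.s., giving the inequality $\leq$. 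For the reverse, for every $\delta>0$ the definition of $\ess\sup$ gives $\mathbb{P}(g(\xi^{t,x,\nu_1}_\varepsilon(T)) > M-\delta) > 0$, hence by strict monotonicity $\mathbb{P}(\phi(g(\xi^{t,x,\nu_1}_\varepsilon(T))) > \phi(M-\delta)) > 0$, so the left-hand side is at least $\phi(M-\delta)$. Letting $\delta \to 0^+$ and invoking continuity of $\phi$ closes the inequality.

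Next I would take the infimum over $\nu_1 \in \mathcal{A}_1$ on both sides of the above identity. Writing $F(\nu_1) := \ess\sup_\omega g(\xi^{t,x,\nu_1}_\varepsilon(T)(\omega))$, the previous step gives $V^\varepsilon_{\phi(g)}(t,x) = \inf_{\nu_1} \phi(F(\nu_1))$, while $\phi(V^\varepsilon_g(t,x)) = \phi(\inf_{\nu_1} F(\nu_1))$. Monotonicity of $\phi$ immediately yields $\phi(\inf_{\nu_1} F(\nu_1)) \leq \inf_{\nu_1} \phi(F(\nu_1))$; for the converse, take a minimising sequence $\nu_1^{(n)}$ with $F(\nu_1^{(n)}) \to \inf F$ and apply continuity of $\phi$ to obtain $\phi(F(\nu_1^{(n)})) \to \phi(\inf F)$, which forces $\inf_{\nu_1} \phi(F(\nu_1)) \leq \phi(\inf_{\nu_1} F(\nu_1))$. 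Combining these two equalities yields the claim.

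The only delicate point is the first step: the equality between $\ess\sup \phi(Y)$ and $\phi(\ess\sup Y)$ genuinely requires both strict monotonicity (to transport the positive-probability event $\{Y>M-\delta\}$ to $\{\phi(Y)>\phi(M-\delta)\}$) and continuity of $\phi$ (to pass to the limit $\delta \to 0^+$). Boundedness and uniform continuity of $g$, which are in the hypotheses of the lemma, ensure that $M$ is finite so that $\phi(M-\delta)\to \phi(M)$ is well defined; the rest is routine.
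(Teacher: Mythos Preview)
Your proof is correct and follows essentially the same approach as the paper: commute $\phi$ with the $\ess\sup$ and with the $\inf$ separately, using continuity and strict monotonicity. The paper's own argument is extremely terse (it simply asserts $\phi(\inf A)=\inf\phi(A)$ and $\phi(\ess\sup f)=\ess\sup\phi(f)$ without justification), whereas you supply the details for both commutations, so your version is in fact more complete.
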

\begin{proof}
Since $\phi$ is an increasing and continuous function, we remark that $\phi(\inf A)= \inf \phi(A)$ where $A \subset \mathbb{R}$. Then, for every measurable function $f: \Omega \rightarrow \mathbb{R}$ it is easy to see that
\begin{equation*}
\phi(ess \sup f) = ess \sup(\phi(f))
\end{equation*}
and so we can conclude the proof.
\end{proof}
\begin{remark}
Lemmas \ref{4a} and \ref{4b} allow us to conclude that the set $\{V(t, x)\leq 0 \}$ depends only on the set $\{ g(x)\leq 0\}$ and not on the specific form of $g$. Furthermore we will show that $V^{\varepsilon}(t, x)$ solves (in the viscosity sense) the level set equation for the evolution by horizontal mean curvature flow for a fixed $0<\varepsilon<1$. 
\end{remark}
We state now the main theorem of the paper.
\begin{theorem} \label{c6wee}
Let us consider $0<\varepsilon<1$ fixed. Let $g:\mathbb{R}^{N} \rightarrow \mathbb{R}$ be  globally bounded and Lipschitz function, $T > 0$ and 
\begin{equation*}
\sigma_{\varepsilon}(x) = [X_{1}(x), ..., X_{m}(x), \varepsilon E_{m+1}(x) , \dots , \varepsilon E_{N}(x)]^{T}
\end{equation*}
a $N \times N$  matrix obtained from the Riemannian approximation of the $m \times N$ H\"{o}rmander  matrix $\sigma(x) = [X_{1}(x), ..., X_{m}(x)]^{T}$ with $m\leq N$ and smooth coefficients and $E_{i}=(0, \dots, 0,1,0 \dots, 0)^{T}$ where 1 is in the $i$-th component. Assuming that $\sigma_{\varepsilon}$ and $\nu_{\varepsilon}(x)=\sum_{i=1}^{N} \nabla_{ X^{\varepsilon}_{i}}X^{\varepsilon}_{j} (x)$ are Lipschitz (in order to have non-explosion for the solution of the SDE), then the value function $V^{\varepsilon} (t, x)$ defined by \eqref{thr5} is a bounded lower semicontinuous viscosity solution of the level set equation for the evolution by approximated Riemannian mean curvature flow, with terminal condition $V^{\varepsilon} (T, x) = g(x)$.
\end{theorem}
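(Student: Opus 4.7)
My strategy is to transplant the scheme of Dirr, Dragoni and von Renesse \cite{dirr} (itself after Cardaliaguet, Quincampoix and Buckdahn \cite{car}) to the Riemannian-approximation setting, with the sub-Riemannian H\"ormander matrix $\sigma$ replaced everywhere by the non-degenerate square matrix $\sigma_\varepsilon$. The pivotal device is the regularised value function $V^\varepsilon_p$ defined in \eqref{c6bar}: for each fixed $p>1$ it is a bounded continuous viscosity solution of a uniformly parabolic PDE, and as $p\to\infty$ it converges monotonically to $V^\varepsilon$, so that the stability of viscosity solutions under half-relaxed limits closes the argument.

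First I would verify that, for each fixed $p>1$ and $\varepsilon\in(0,1)$, $V^\varepsilon_p$ is a continuous viscosity solution of
\begin{equation*}
-\partial_t V^\varepsilon_p+H^\varepsilon_p(x,V^\varepsilon_p,DV^\varepsilon_p,D^2V^\varepsilon_p)=0,\qquad V^\varepsilon_p(T,\cdot)=g.
\end{equation*}
This is the dynamic programming principle applied to the controlled Stratonovich SDE \eqref{kkk1f}: the Lipschitz hypotheses on $\sigma_\varepsilon$ and on $\nu_\varepsilon=\sum_i \nabla_{X^\varepsilon_i}X^\varepsilon_j$ prevent explosion of $\xi^{t,x,\nu_1}_\varepsilon$, the Stratonovich--It\^o conversion generates precisely the deterministic term $A_\varepsilon(x,p)$, and optimising over $\nu_1\in\mathcal{A}_1$ reproduces $H^\varepsilon_p$ via the convex reformulation $\mathcal{A}_1=\mathrm{Co}\{I_N-\bar a\otimes\bar a:|\bar a|=1\}$ recorded in Section \ref{o}. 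Boundedness $\|V^\varepsilon_p\|_\infty\leq\|g\|_\infty$ is immediate from \eqref{c6bar}, while continuity comes from standard SDE-flow estimates combined with the Lipschitz assumption on $g$.

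Next I would pass to the limit $p\to\infty$. By Lemma \ref{4b} applied to a strictly increasing $\phi$ large enough that $\phi\circ g\geq 0$, one may assume $g\geq 0$; the absolute value in \eqref{c6bar} then drops, and the classical monotonicity $\|\cdot\|_{L^p}\nearrow\|\cdot\|_{L^\infty}$ yields $V^\varepsilon_p\nearrow V^\varepsilon$ pointwise. Hence $V^\varepsilon$ is a pointwise supremum of a monotone family of bounded continuous functions, therefore lower semicontinuous and bounded by $\|g\|_\infty$. Applying the Barles--Perthame half-relaxed-limit stability theorem to this monotone family then forces $V^\varepsilon$ to be a supersolution of $-\partial_t u+F^\varepsilon_*(x,Du,D^2u)=0$ and the upper envelope $(V^\varepsilon)^*$ to be a subsolution of the analogous equation with $F^{\varepsilon,*}$. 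The pivotal calculation is the identification of the half-relaxed limits of $H^\varepsilon_p$ with $F^\varepsilon_*$ and $F^{\varepsilon,*}$: via the convex representation of $\mathcal{A}_1$ this reduces to an eigenvalue analysis of $\overline{S}_\varepsilon=\sigma_\varepsilon S\sigma_\varepsilon^T+A_\varepsilon(x,p)$. When $\sigma_\varepsilon(x)p\neq 0$ the $(p-1)r^{-1}$-penalty in \eqref{hami21} forces the optimal $\bar a$ to align with $\sigma_\varepsilon(x)p/|\sigma_\varepsilon(x)p|$ and the regular branch of $F^\varepsilon_*$ appears; at $\sigma_\varepsilon(x)p=0$ the penalty becomes inactive as $p\to\infty$ and the optimisation selects the minimal (respectively maximal) eigenvalue of $\overline{S}_\varepsilon$, matching the second branch of the envelope.

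The terminal condition $V^\varepsilon(T,x)=g(x)$ is immediate at $t=T$ since $\xi^{T,x,\nu_1}_\varepsilon(T)=x$; the trace from $t<T$ follows from continuity of the flow $s\mapsto\xi^{t,x,\nu_1}_\varepsilon(s)$ at $s=t$ together with the uniform (in $\nu_1$) control $\mathbb{E}|\xi^{t,x,\nu_1}_\varepsilon(T)-x|\leq C(T-t)^{1/2}$ and uniform continuity of $g$. The main obstacle I expect is the half-relaxed-limit identification on the degenerate set $\{\sigma_\varepsilon(x)p=0\}$: since $\det\sigma_\varepsilon\neq 0$ this collapses to $\{p=0\}$, strictly simpler than in \cite{dirr} where one must additionally handle characteristic points, but care is still needed to control the $\varepsilon$-small entries of $\sigma_\varepsilon$ when letting $p\to\infty$ and $(q,M)\to(p,S)$ simultaneously. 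Once this is settled, the remainder is a direct transcription of \cite{dirr} with $\sigma_\varepsilon,\mathcal{A}_1,A_\varepsilon$ in place of $\sigma,\mathcal{A},A$.
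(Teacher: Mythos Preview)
Your plan is essentially the paper's own proof: regularise via $V^\varepsilon_p$, use that each $V^\varepsilon_p$ solves the $p$-Hamilton--Jacobi equation, then pass to the limit $p\to\infty$ by a half-relaxed-limit argument, splitting the identification of the limiting Hamiltonian into the cases $\sigma_\varepsilon(x)p\neq 0$ (alignment forced by the penalty, handled in the paper by a Taylor expansion of $\lambda_{\max}$ using Lemma~\ref{c6noo}) and $\sigma_\varepsilon(x)p=0$ (eigenvalue selection via subadditivity of $\lambda_{\max}$). The paper packages the stability step as a direct test-function argument rather than citing Barles--Perthame abstractly, but the content is identical.

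One point in your sketch is too quick: the claim that ``the classical monotonicity $\|\cdot\|_{L^p}\nearrow\|\cdot\|_{L^\infty}$ yields $V^\varepsilon_p\nearrow V^\varepsilon$ pointwise'' only gives $\lim_p V^\varepsilon_p\leq V^\varepsilon$, since the infimum over controls can in principle run away. The reverse inequality needs a compactness argument on the control set (near-optimal controls $\nu_{1,q}$ for $V^\varepsilon_q$ have weakly convergent trajectories $\xi^{t,x,\nu_{1,q_k}}_\varepsilon$ along a subsequence, and the limit control witnesses $V^\varepsilon\leq\lim_q V^\varepsilon_q$); this is exactly Lemma~\ref{c6ca} in the paper and is in \cite{dirr}, so you should invoke it rather than monotonicity alone.
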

\begin{remark}
$V^{\varepsilon}(t,x)$ is a lower semicontinuous function.
\end{remark}
In order to prove the Theorem \ref{c6wee} we have to introduce the half-relaxed upper-limit.
\begin{definition} \label{c6rem1}
We define the \emph{relaxed half-relaxed upper-limit} of $V_{p}^{\varepsilon}(t,x)$
\begin{equation*}
V^{\sharp, \varepsilon}(t,x):=\limsup_{(s,y) \rightarrow (t,x) \ \ p \rightarrow \infty} V^{\varepsilon}_{p}(s,y).
\end{equation*}
\end{definition}
This lemma allows to use the definition of upper half-relaxed limit instead of the definition of upper envelope.
\begin{lemma} \label{c6say}
Let us consider $0<\varepsilon<1$ fixed. It holds true
\begin{equation*}
V^{\sharp , \varepsilon}(t,x) = V^{* , \varepsilon}(t,x) \ \ \  \mbox{for all} \ \ \ (t,x) \in [0,T] \times \mathbb{R}^{N}
\end{equation*}
where they are defined as in Definitions \ref{c6rem} and \ref{c6rem1}.
\end{lemma}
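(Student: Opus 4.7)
The plan is to reduce the identity $V^{\sharp,\varepsilon}=V^{*,\varepsilon}$ to the pointwise statement $V_p^{\varepsilon}(t,x)\nearrow V^{\varepsilon}(t,x)$ as $p\to\infty$, and then extract the two inequalities through standard diagonal/limsup manipulations.

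First I would establish the auxiliary fact that $p\mapsto V_p^{\varepsilon}(t,x)$ is non-decreasing and converges to $V^{\varepsilon}(t,x)$ as $p\to\infty$. Monotonicity is immediate from Lyapunov's inequality on the probability space $(\Omega,\mathcal{F},\mathbb{P})$: for every admissible control $\nu_1\in\mathcal{A}_1$ and any $1<p<q$, $\mathbb{E}[|g(\xi^{t,x,\nu_1}_{\varepsilon}(T))|^{p}]^{1/p}\leq \mathbb{E}[|g(\xi^{t,x,\nu_1}_{\varepsilon}(T))|^{q}]^{1/q}$, so taking the infimum preserves the order. The upper bound $V_p^{\varepsilon}\leq V^{\varepsilon}$ follows from $\|\cdot\|_{L^p}\leq \|\cdot\|_{L^{\infty}}$ on a probability space. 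The reverse inequality $\sup_p V_p^{\varepsilon}(t,x)\geq V^{\varepsilon}(t,x)$ is the delicate point: since the passage $L^p\to L^{\infty}$ has to be commuted with the infimum over $\nu_1\in\mathcal{A}_1$, one has to invoke either a minimax/compactness argument on $\mathcal{A}_1$ (which is convex and closed in the weak-* topology of essentially bounded predictable processes) or exploit the boundedness and uniform continuity of $g$ together with the tightness of the laws of $\xi^{t,x,\nu_1}_{\varepsilon}(T)$ uniformly in $\nu_1$. I would argue this by fixing $\delta>0$, choosing a $\delta$-optimal $\nu_1^p$ at level $p$, extracting a weakly convergent subsequence $\nu_1^{p}\to\nu_1^{*}$, and using stability of the SDE \eqref{kkk1f} under such convergence to pass to the limit.

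Once the pointwise monotone convergence is in place, the rest is soft. For the inequality $V^{\sharp,\varepsilon}(t,x)\leq V^{*,\varepsilon}(t,x)$, I note that for every sequence $(s_n,y_n,p_n)\to (t,x,\infty)$ we have $V_{p_n}^{\varepsilon}(s_n,y_n)\leq V^{\varepsilon}(s_n,y_n)$, so taking limsup yields $V^{\sharp,\varepsilon}(t,x)\leq \limsup_{n}V^{\varepsilon}(s_n,y_n)\leq V^{*,\varepsilon}(t,x)$. For the reverse, I select a sequence $(s_n,y_n)\to (t,x)$ realising $V^{*,\varepsilon}(t,x)$, i.e.\ with $V^{\varepsilon}(s_n,y_n)\to V^{*,\varepsilon}(t,x)$; for each $n$ the monotone pointwise convergence gives some $p_n\geq n$ with $V_{p_n}^{\varepsilon}(s_n,y_n)\geq V^{\varepsilon}(s_n,y_n)-1/n$. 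The resulting diagonal sequence satisfies $(s_n,y_n,p_n)\to(t,x,\infty)$ and $\liminf_n V_{p_n}^{\varepsilon}(s_n,y_n)\geq V^{*,\varepsilon}(t,x)$, hence $V^{\sharp,\varepsilon}(t,x)\geq V^{*,\varepsilon}(t,x)$.

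The main obstacle is clearly the interchange of the infimum over $\mathcal{A}_1$ with the limit $p\to\infty$ of the $L^p$ norms, used to show $V_p^{\varepsilon}\nearrow V^{\varepsilon}$. The inequality $\sup_p\inf_{\nu_1}\leq \inf_{\nu_1}\sup_p$ is free, but the opposite direction requires a genuine compactness argument on the set $\mathcal{A}_1$ from \eqref{celfudra}, coupled with continuity of the map $\nu_1\mapsto g(\xi_{\varepsilon}^{t,x,\nu_1}(T))$ in a suitable topology. Everything else, including the diagonal construction and the two limsup bounds, is then routine.
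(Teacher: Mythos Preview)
Your proposal is correct and follows essentially the same strategy as the paper: both directions rest on the trivial bound $V_p^{\varepsilon}\leq V^{\varepsilon}$ together with the pointwise convergence $V_p^{\varepsilon}\nearrow V^{\varepsilon}$, and you correctly identify the interchange of the infimum over $\mathcal{A}_1$ with the $L^p\to L^\infty$ limit as the only substantive step (the paper isolates this as a separate lemma, Lemma~\ref{c6ca}, proved by exactly the weak-compactness argument you sketch). The only cosmetic difference is that for the inequality $V^{\sharp,\varepsilon}\geq V^{*,\varepsilon}$ the paper avoids your diagonal construction by noting directly that the half-relaxed upper limit $V^{\sharp,\varepsilon}$ is automatically upper semicontinuous and, by the pointwise convergence, dominates $V^{\varepsilon}$, hence dominates its smallest u.s.c.\ majorant $V^{*,\varepsilon}$.
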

\begin{proof}
We observe that $V^{\sharp, \varepsilon} \geq V^{\varepsilon}$ and $V^{\sharp, \varepsilon}$ is upper semicontinuous function. Then, since $V^{*, \varepsilon}$ is the smallest upper envelope it holds $V^{\sharp, \varepsilon} \geq V^{*, \varepsilon}$. On the other hand, recalling that $V^{\varepsilon}_{p}(t,x) \leq V^{\varepsilon}(t,x)$ for any $t$,$x$, and $p>1$ and $\varepsilon>0$ fixed, then taking the $\limsup$ in $t$,$x$ and $p$ we obtain that $V^{\sharp , \varepsilon} \leq V^{*, \varepsilon}$ and as consequence the result follows.
\end{proof}
Another important observation is related to the $L^{p}$-norm related to $V^{\varepsilon}(t,x)$, i.e. $V^{\varepsilon}_{p}(t,x)$ as in Definition \ref{nor}.

We obtain the following result for $0<\varepsilon<1$ fixed.
\begin{lemma} \label{c6ca}
Let us consider $0<\varepsilon < 1$ fixed. Under the assumptions of Theorem \ref{c6wee}, we have 
\begin{equation*}
V^{\varepsilon}(t,x) = \lim_{p \rightarrow \infty} V_{p}^{\varepsilon}(t,x) \ \ \mbox{for all} \ \ (t,x) \in [0,T] \times \mathbb{R}^{N}
\end{equation*}
as pointwise convergence.
\end{lemma}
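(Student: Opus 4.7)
The plan is to prove the two inequalities $\limsup_{p\to\infty} V_p^{\varepsilon}(t,x)\leq V^{\varepsilon}(t,x)$ and $\liminf_{p\to\infty} V_p^{\varepsilon}(t,x)\geq V^{\varepsilon}(t,x)$. By Lemma \ref{4b} applied with the strictly increasing continuous function $\phi(s)=s+\|g\|_{\infty}$, together with the standard reduction for level set functions, I may assume without loss of generality that $g\geq 0$, so that $|g|=g$ and $V^{\varepsilon}$ and $V_p^{\varepsilon}$ are directly comparable.

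For the upper bound, for any $\nu_1\in\mathcal{A}_1$ and any $p>1$, the elementary estimate $\|Y\|_{L^p}\leq \|Y\|_{L^\infty}$ valid on any probability space gives
\begin{equation*}
\mathbb{E}\bigl[g\bigl(\xi^{t,x,\nu_1}_{\varepsilon}(T)\bigr)^p\bigr]^{1/p}\leq \operatorname{ess\,sup}_{\omega\in\Omega} g\bigl(\xi^{t,x,\nu_1}_{\varepsilon}(T)(\omega)\bigr).
\end{equation*}
Taking the infimum over $\nu_1\in\mathcal{A}_1$ yields $V_p^{\varepsilon}(t,x)\leq V^{\varepsilon}(t,x)$. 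Jensen's inequality makes $p\mapsto \|Y\|_{L^p}$ non-decreasing, and this monotonicity passes to the infimum, so that the limit $L(t,x):=\lim_{p\to\infty} V_p^{\varepsilon}(t,x)=\sup_{p>1} V_p^{\varepsilon}(t,x)$ exists and satisfies $L(t,x)\leq V^{\varepsilon}(t,x)$.

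For the reverse inequality I would argue by extracting a limiting control from near-optima of $V_p^{\varepsilon}$. For each $p$ pick $\nu_p\in\mathcal{A}_1$ with $\mathbb{E}[g(\xi^{t,x,\nu_p}_{\varepsilon}(T))^p]^{1/p}\leq V_p^{\varepsilon}(t,x)+1/p\leq L(t,x)+1/p$. Recalling from the reformulation following \eqref{celfudra} that $\mathcal{A}_1$ can be identified with the set of predictable processes valued in the compact convex set $Co\{I_N-\overline{a}\otimes\overline{a}:\overline{a}\in\mathbb{R}^N,\ |\overline{a}|=1\}$, the family $\{\nu_p\}$ is bounded in $L^2([t,T]\times\Omega;\mathrm{Sym}(N))$, and one extracts a subsequence $\nu_{p_k}$ converging weakly to some $\nu^*\in\mathcal{A}_1$. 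Under the Lipschitz assumptions on $\sigma_{\varepsilon}$ and on $\nu_{\varepsilon}$ in Theorem \ref{c6wee}, the Stratonovich SDE \eqref{kkk1f} depends continuously on the control in an appropriate sense, so that $g(\xi^{t,x,\nu_{p_k}}_{\varepsilon}(T))\to g(\xi^{t,x,\nu^*}_{\varepsilon}(T))$ in $L^q(\Omega)$ for every $q<\infty$ (using that $g$ is Lipschitz and bounded). For any fixed $q<\infty$ and $p_k\geq q$, monotonicity of $L^p$ norms gives
\begin{equation*}
\|g(\xi^{t,x,\nu_{p_k}}_{\varepsilon}(T))\|_{L^q}\leq \|g(\xi^{t,x,\nu_{p_k}}_{\varepsilon}(T))\|_{L^{p_k}}\leq L(t,x)+1/p_k,
\end{equation*}
and passing to the limit in $k$ yields $\|g(\xi^{t,x,\nu^*}_{\varepsilon}(T))\|_{L^q}\leq L(t,x)$. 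Letting finally $q\to\infty$, $\operatorname{ess\,sup}_{\omega\in\Omega} g(\xi^{t,x,\nu^*}_{\varepsilon}(T)(\omega))\leq L(t,x)$, and since $V^{\varepsilon}(t,x)$ is the infimum of such quantities over $\mathcal{A}_1$, $V^{\varepsilon}(t,x)\leq L(t,x)$, concluding the argument.

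The main technical obstacle is the compactness-and-continuous-dependence step in the lower bound: the set $\mathcal{A}_1$ is infinite-dimensional, and it is not enough to extract a weak limit in $L^2$; one must also ensure that the non-linear map $\nu_1\mapsto g(\xi^{t,x,\nu_1}_{\varepsilon}(T))$ is continuous under the chosen weak convergence. Because the control $\nu_1$ appears multiplicatively against the noise in \eqref{kkk1f}, this is classically handled via a relaxed-control/Young-measure formulation, exploiting precisely the Lipschitz regularity of $\sigma_{\varepsilon}$ and of $\nu_{\varepsilon}$ assumed in Theorem \ref{c6wee}. The argument follows the strategy of Cardaliaguet--Quincampoix--Buckdahn in \cite{car} and of Dirr--Dragoni--von Renesse in \cite{dirr}, with the non-degeneracy of $\sigma_{\varepsilon}$ making the passage to the limit slightly cleaner than in the purely sub-Riemannian setting.
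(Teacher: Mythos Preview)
Your proposal is correct and follows essentially the same route as the paper: the upper bound via $\|\cdot\|_{L^p}\leq\|\cdot\|_{L^\infty}$, and the lower bound by choosing near-optimal controls $\nu_p$, extracting a weakly convergent subsequence (using that the controls live, via $\nu_1^2$, in the compact convex set $Co\{I_N-\overline a\otimes\overline a\}$), passing to the limit in the $L^{\bar q}$ cost for fixed $\bar q$, and then sending $\bar q\to\infty$. The paper phrases the compactness step as weak convergence of the controlled \emph{processes} $\xi^{t,x,\nu_{1,q_k}}_{\varepsilon}$ (relying on \cite{dirr} and the fact that the It\^o drift depends on the control only through $\nu_1^2$), whereas you phrase it as weak convergence of the controls plus continuous dependence of the SDE; these are two sides of the same relaxed-control argument, and you correctly flag this as the only nontrivial step and cite the same sources.
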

\begin{proof}
As the $L^{p}$ norm are bounded by essential supremum and increasing we obtain immediately for each fixed control and $\varepsilon>0$
\begin{equation*}
V^{\varepsilon}(t,x) \geq V^{\varepsilon}_{p}(t,x).
\end{equation*}
The other inequality will be proved as in \cite{dirr}. Let us consider $q\geq 1$, then by the property of the infimum we can find a control $\nu_{q}$ such that 
\begin{equation*}
\bigg( \mathbb{E}[ g^{p}( \xi^{t,x, \nu_{1, q}}_{ \varepsilon} (T))] \bigg)^{\frac{1}{q}} \leq V^{\varepsilon}_{q}(t,x) + \frac{1}{q}.
\end{equation*}
The controlled SDE \eqref{kkk1f} has a drift part which depends on the control only through $\nu_{1}^{2}$ (we recall by assumption that $\varepsilon>0$ is fixed) and our control set is convex in $\nu^{2}_{1}$. Proceeding as \cite{dirr}, we obtain that there exists a probability space $(\Omega , \mathcal{F} , \{ \mathcal{F}_{t} \}_{t \geq 0}, \mathbb{P} , B_{N}, \nu_{1} )$ such that for a subsequence $q_{k}$ the process $\xi^{t,x, \nu_{1, q_{k}}}_{ \varepsilon}$ converges weakly to $\xi^{t,x, \nu_{1}}$ and so for any fixed $\overline{q} \geq 1$
\begin{equation*}
\lim_{k \rightarrow \infty} \bigg( \mathbb{E}[ g^{q}(\xi^{t,x, \nu_{1, q_{k}}}_{ \varepsilon}(T))] \bigg)^{\frac{1}{q}} = \bigg( \mathbb{E} [g^{\overline{q}}(\xi^{t,x, \nu_{1}}_{\varepsilon}(T)) ]  \bigg)^{\frac{1}{\overline{q}}}.
\end{equation*}
Since the $L^{q}$ is non decreasing in $q$
\begin{equation*}
\bigg( \mathbb{E}[g^{q}(\xi^{t, x , \nu_{1}}_{\varepsilon}(T))] \bigg)^{\frac{1}{\overline{q}}} \leq \lim_{q \rightarrow \infty} V^{\varepsilon}_{q}(t,x).
\end{equation*}
Finally, using the convergence of $L^{q}$ norm to $L^{\infty}$ we obtain
\begin{equation*}
V^{\varepsilon}(t,x) \leq \lim_{q \rightarrow \infty} V_{q}^{\varepsilon}(t,x).
\end{equation*}
\end{proof}
In order to prove that $V^{\varepsilon}$ is a viscosity solution of approximated Riemannian mean curvature flow we have to recall a further lemma.
\begin{lemma}[\cite{car}] \label{c6noo}
Let $S \in Sym(N)$ such that the space of the eigenvectors associated to the maximum eigenvalue is of the dimension one. Then, $S \rightarrow \lambda_{max}(S)$ is $C^{1}$ in a neighbourhood of $S$. Moreover, $D \lambda_{max}(S)(H)= <Ha,a>$, for any $a \in \mathbb{R}^{m}$ eigenvector associated to $\lambda_{max}(S)$ and $|a|=1$.
\end{lemma}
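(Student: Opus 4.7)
The plan is to derive the result from the implicit function theorem applied to the characteristic polynomial, combined with a smooth choice of normalised eigenvector obtained from the Riesz spectral projector. Write $p(S,\lambda) = \det(\lambda I - S)$; this is a polynomial in $\lambda$ whose coefficients depend polynomially, hence smoothly, on the entries of $S$. Let $S_{0}$ denote the matrix in the statement, put $\lambda_{0}:=\lambda_{max}(S_{0})$, and let $\mu_{1},\dots,\mu_{N-1}$ be the remaining eigenvalues of $S_{0}$ counted with multiplicity. Since $\lambda_{0}$ is simple, $\partial_{\lambda}p(S_{0},\lambda_{0})=\prod_{j}(\lambda_{0}-\mu_{j})\neq 0$, so the implicit function theorem provides a $C^{\infty}$ function $\lambda:\mathcal{U}\to\mathbb{R}$ defined on a neighbourhood $\mathcal{U}$ of $S_{0}$ in $Sym(N)$, with $\lambda(S_{0})=\lambda_{0}$ and $p(S,\lambda(S))=0$ on $\mathcal{U}$.

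The next step is to identify $\lambda(S)$ with $\lambda_{max}(S)$ on $\mathcal{U}$. Since the roots of $p(S,\cdot)$ depend continuously on $S$, shrinking $\mathcal{U}$ if necessary we may assume that the $N-1$ eigenvalues other than $\lambda(S)$ remain in a small neighbourhood of $\{\mu_{1},\dots,\mu_{N-1}\}$ (all strictly less than $\lambda_{0}$), while $\lambda(S)$ stays close to $\lambda_{0}$ and is therefore still the maximum eigenvalue.

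To compute the derivative I construct a smooth unit eigenvector. Fix a unit eigenvector $a_{0}$ of $S_{0}$ associated to $\lambda_{0}$, choose a small positively oriented circle $\Gamma$ enclosing $\lambda_{0}$ and excluding the $\mu_{j}$'s, and define
\begin{equation*}
P(S) = \frac{1}{2\pi i}\oint_{\Gamma} (zI-S)^{-1}\,dz.
\end{equation*}
For $S$ close enough to $S_{0}$ the curve $\Gamma$ still separates $\lambda(S)$ from the other eigenvalues, so $P(S)$ is a smooth rank-one orthogonal projector onto $\ker(S-\lambda(S)I)$. Setting $a(S):=P(S)a_{0}/|P(S)a_{0}|$ gives a $C^{\infty}$ map on a neighbourhood of $S_{0}$ with $a(S_{0})=a_{0}$, $|a(S)|=1$, and $S\,a(S)=\lambda(S)\,a(S)$.

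Taking the inner product of the eigenvalue identity with $a(S)$ yields the Rayleigh representation $\lambda(S)=\langle S\,a(S),a(S)\rangle$. Differentiating at $S_{0}$ in the direction $H\in Sym(N)$ gives
\begin{equation*}
D\lambda(S_{0})(H) = \langle H a_{0}, a_{0}\rangle + 2\langle S_{0}\,Da(S_{0})(H),\,a_{0}\rangle,
\end{equation*}
and since $S_{0}a_{0}=\lambda_{0}a_{0}$ the second term equals $2\lambda_{0}\langle Da(S_{0})(H),a_{0}\rangle$. Differentiating $|a(S)|^{2}=1$ at $S_{0}$ produces $\langle Da(S_{0})(H),a_{0}\rangle=0$, so $D\lambda(S_{0})(H)=\langle Ha_{0},a_{0}\rangle$. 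The right-hand side is invariant under $a_{0}\mapsto -a_{0}$, and in the simple eigenvalue case the unit eigenvector is unique up to sign, so the same formula holds for any unit eigenvector $a$ associated to $\lambda_{max}(S_{0})$. The most delicate step is producing the smooth eigenvector branch: the implicit function theorem immediately handles $\lambda(S)$, but the linear system $(S-\lambda(S)I)a(S)=0$ is singular, and the Riesz-projector construction is what bypasses this difficulty cleanly.
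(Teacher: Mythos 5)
Your proposal is correct. Note, however, that the paper does not prove this lemma at all: it is quoted verbatim from \cite{car} (Buckdahn--Cardaliaguet--Quincampoix) and used as a black box, so there is no internal proof to compare against. What you have written is a legitimate self-contained proof of the cited statement, and it follows the classical perturbation-theory route for a simple eigenvalue: the implicit function theorem applied to $p(S,\lambda)=\det(\lambda I-S)$, whose $\lambda$-derivative at $(S_{0},\lambda_{0})$ is $\prod_{j}(\lambda_{0}-\mu_{j})\neq 0$ precisely because the top eigenvalue is simple, gives the smooth branch $\lambda(S)$; continuity of the spectrum identifies it with $\lambda_{max}$ near $S_{0}$; the Riesz projector supplies the smooth unit eigenvector branch that the singular system $(S-\lambda(S)I)a=0$ cannot give directly; and the Rayleigh identity $\lambda(S)=\langle Sa(S),a(S)\rangle$, differentiated using the symmetry of $S_{0}$ and $\langle Da(S_{0})(H),a_{0}\rangle=0$ from $|a(S)|^{2}=1$, yields $D\lambda_{max}(S_{0})(H)=\langle Ha_{0},a_{0}\rangle$, with the sign ambiguity of $a_{0}$ harmless. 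Two cosmetic points: for real symmetric $S$ you should remark that $P(S)$ is real (take $\Gamma$ symmetric about the real axis, or observe that the Riesz projector coincides with the orthogonal spectral projector), and the eigenvector in the lemma as printed lives in $\mathbb{R}^{m}$ while $S\in Sym(N)$ --- that is a typo in the statement inherited by the paper, not an issue with your argument.
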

The Theorem \ref{c6wee} is the consequence of the following theorem.
\begin{theorem}
Let us consider $0<\varepsilon<1$ fixed. Let $g: \mathbb{R}^{N} \rightarrow \mathbb{R}$ be a globally bounded and Lipschitz function, $T>0$ and $\sigma_{\varepsilon}(x)$ a Riemannian approximation of the $m \times N$-H\"{o}rmander  matrix $\sigma(x)$. Since the comparison principle holds (see \cite{az}), then the value function $V^{\varepsilon}(t,x)$ is the unique continuous viscosity solution of approximated Riemannian mean curvature flow, satisfying $V^{\varepsilon}(T,x)=g(x)$.
\end{theorem}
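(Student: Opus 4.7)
The plan is to establish that $V^{*,\varepsilon}$ and $V^{\varepsilon}_{*}$ are respectively a viscosity sub- and super-solution of the level-set PDE~\eqref{b} with terminal datum $g$, then invoke the comparison principle from~\cite{az} to force $V^{*,\varepsilon}\leq V^{\varepsilon}_{*}$. Combined with the trivial inequality $V^{\varepsilon}_{*}\leq V^{\varepsilon}\leq V^{*,\varepsilon}$, this forces equality throughout, so $V^{\varepsilon}$ is continuous, a genuine viscosity solution, and uniqueness follows by a further application of comparison.

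For the subsolution property, I would work through the $p$-approximations $V_{p}^{\varepsilon}$ from~\eqref{c6bar}. Each $V_{p}^{\varepsilon}$ is, by the remark following Definition~\ref{nor}, a viscosity solution of the Bellman equation with Hamiltonian $H_{p}^{\varepsilon}$ given in~\eqref{hami21}. Using Lemma~\ref{c6say} to identify the envelope $V^{*,\varepsilon}$ with the half-relaxed upper limit $V^{\sharp,\varepsilon}$, and applying the standard stability/half-relaxed-limit machinery, sending $p\to\infty$ produces a subsolution of $u_{t}+F^{*}_{\varepsilon}=0$; the delicate points are the degenerate ones $|\sigma_{\varepsilon}(x)p|=0$, where the limiting Hamiltonian acquires the term $\lambda_{\max}(\overline{S}_{\varepsilon})$ and Lemma~\ref{c6noo} must be used to convert the $\sup_{\nu\in\mathcal{A}_{1}}$ in $H_{p}^{\varepsilon}$ into the required envelope form. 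Lemma~\ref{c6ca} ensures that the terminal datum $V^{*,\varepsilon}(T,\cdot)\leq g$ is respected in the limit.

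For the supersolution property, I would apply the dynamic programming principle directly to $V^{\varepsilon}$ along~\eqref{kkk1f}: given a smooth test function $\varphi$ touching $V^{\varepsilon}_{*}$ from below at $(t_{0},x_{0})$, compose $\varphi$ with $\xi^{t_{0},x_{0},\nu_{1}}_{\varepsilon}$, apply the It\^o--Stratonovich formula, and use suboptimality of any fixed control $\nu_{1}\in\mathcal{A}_{1}$ to obtain $\varphi_{t}+F_{*,\varepsilon}(x_{0},D\varphi,D^{2}\varphi)\geq 0$. The Lipschitz assumptions on $\sigma_{\varepsilon}$ and $\nu_{\varepsilon}$ in the hypothesis are exactly what is needed for non-explosion and well-posedness of this computation. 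Once the two one-sided viscosity inequalities are in place, the comparison principle of~\cite{az} and the chain of inequalities above yield continuity; uniqueness then follows because any other continuous viscosity solution with terminal datum $g$ must both majorise and minorise $V^{\varepsilon}$ by comparison.

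The main obstacle I expect is the subsolution step: the limit $p\to\infty$ of $H^{\varepsilon}_{p}$ in~\eqref{hami21} is singular because the penalisation $-(p-1)r^{-1}\,Tr[\nu\nu^{T}qq^{T}]$ forces the optimal controls to concentrate on $\{\nu:\nu q=0\}$, and matching this limit with the discontinuous envelope $F^{*}_{\varepsilon}$ precisely at the points where $|\sigma_{\varepsilon}(x)p|=0$ is where Lemma~\ref{c6noo} together with the convex representation of $\mathcal{A}_{1}$ recorded just after~\eqref{celfudra} becomes essential. Everything else reduces to routine applications of stability of viscosity solutions, the dynamic programming principle, and the comparison principle cited from~\cite{az}.
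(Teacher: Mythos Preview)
Your overall scheme (envelopes, $p$-approximations, comparison from \cite{az}) is the paper's scheme, but your supersolution argument diverges from it and, as written, has a gap. You propose to run the dynamic programming principle on $V^{\varepsilon}$ itself, apply the It\^{o}--Stratonovich formula to $\varphi\!\circ\!\xi^{t_0,x_0,\nu_1}_{\varepsilon}$, and read off the inequality from suboptimality of a fixed control. The difficulty is that $V^{\varepsilon}$ is an $L^{\infty}$ (essential-supremum) cost, so It\^{o}'s formula, which produces an \emph{expectation}, does not hook up directly with the DPP inequality for $V^{\varepsilon}$; ``suboptimality of a fixed control'' gives you an inequality between $V^{\varepsilon}(t_0,x_0)$ and an $\ess\sup_{\omega}$, not an expectation you can expand. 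The paper avoids this entirely by using the $p$-approximations $V^{\varepsilon}_{p}$ for \emph{both} the sub- and the supersolution: for each $p$ one locates a nearby local minimum $(t_p,x_p)$ of $V^{\varepsilon}_{p}-\phi$, uses that $V^{\varepsilon}_{p}$ solves the standard expectation-based Bellman equation \eqref{c6mis}, and then lets $p\to\infty$ via Lemma~\ref{c6ca}.

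You also have the location of the hard analysis reversed. In the paper Lemma~\ref{c6noo} (differentiability of $\lambda_{\max}$ at a simple top eigenvalue) is invoked in the \emph{supersolution} step at the \emph{non}-degenerate points $\mathcal{X}_{\varepsilon}\phi(t,x)\neq 0$: one Taylor-expands $\lambda_{\max}$ around the rank-one matrix $S=(\mathcal{X}_{\varepsilon}\phi)(\mathcal{X}_{\varepsilon}\phi)^{T}/V^{\varepsilon}$ to isolate the correct second-order term, while at degenerate points only subadditivity of $\lambda_{\max}$ is used. The subsolution step is in fact the easier one: the paper dispatches it with the single pointwise inequality $H^{\varepsilon}_{p}(x,z,d,S)\ge (H^{\varepsilon})^{*}(x,d,S)$ (valid for all $d$, degenerate or not) and a $\limsup$. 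Finally, the terminal bound $V^{\sharp,\varepsilon}(T,\cdot)\le g$ is not a consequence of Lemma~\ref{c6ca}; it requires a separate barrier construction (Lemma~\ref{5l}) before comparison can be applied.
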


\begin{proof}
We divide this proof in two steps: we prove that $V^{\varepsilon}(t,x)$ is a viscosity supersolution and $V^{\varepsilon , \sharp}(t,x)$ is a viscosity subsolution.
\begin{itemize}
\item \emph{$V^{\varepsilon}$ is a viscosity supersolution}: Let us consider $\phi \in C^{1}([0,T]; C^{2}(\mathbb{R}^{N}))$ such that $V^{\varepsilon} - \phi$ has a local minimum at $(t,x)$. Two cases are possible:\\
if $\mathcal{X}_{\varepsilon} \phi(t,x)  \neq (0, \dots , 0)$ we have to verify that
\begin{equation*}
- \phi_{t}(t,x) - \Delta_{\varepsilon} \phi(t,x) + \Delta_{\varepsilon, \infty} \phi(t,x) \geq 0
\end{equation*}
where the equation is given as in \eqref{hver}. \\  If $\mathcal{X}_{\varepsilon} \phi(t,x)  = (0, \dots , 0)$ we have to verify that
\begin{equation*}
-\phi_{t}(t,x) - \Delta_{\varepsilon} \phi(t,x) + \lambda_{max}( (\mathcal{X}^{2}_{\varepsilon} \phi)^{*}  (t,x)) \geq 0
\end{equation*}
where $(\mathcal{X}^{2}_{\varepsilon} \phi)^{*}$ is defined as \eqref{c6smas1a}. \\
For any $p>1$ there exists a sequence $(t_{p}, x_{p})$ such that $V^{\varepsilon}_{p} - \phi$ has a local minimum at $(t_{p}, x_{p})$ and $(t_{p}, x_{p}) \rightarrow (t,x)$ a $p \rightarrow \infty$. In fact, we can always assume that $(t,x)$ is a strict minimum in some $B_{R}(t,x)$. Set $K= \overline{B_{\frac{R}{2}}(t,x)}$, the sequence of minimum points $(t_{p}, x_{p})$ converge to some $(\overline{t} , \overline{x}) \in K$. As $V^{\varepsilon}$ is the limit of $V^{\varepsilon}_{p}$ as $p \rightarrow \infty$ (see Lemma \ref{c6ca}) and lower semicontinuous, therefore by a standard argument yields that $(\overline{t}, \overline{x})$ is a minimum, hence it equals $(t,x)$. Then it holds true
\begin{equation*}
- \phi_{t}(t_{p}, x_{p}) + H_{\varepsilon}(x_{p}, (p-1) V_{p}^{-1} D \phi (D \phi)^{T} + D^{2} \phi)(t_{p}, x_{p}) \geq 0.
\end{equation*}
If $\sigma_{\varepsilon}(x) D \phi(t,x) \neq 0$, we can write the Hamiltonian in a more explicit way. Set
\begin{equation*} 
S_{1}= (p-1) V_{p}^{-1} (\mathcal{X}_{\varepsilon}\phi(t_{p}, x_{p}))( \mathcal{X}_{\varepsilon} \phi (t_{p}, x_{p}))^{T}
\end{equation*}
and
\begin{equation*} 
S_{2}=  (\mathcal{X}_{\varepsilon}^{2} \phi)^{*} (t_{p}, x_{p})
\end{equation*}
then
\begin{align}\label{5k}
H_{\varepsilon}(x_{p}, S_{1} ,S_{2}) & = - Tr(S_{1} + S_{2}) + \lambda_{max}(S_{1} + S_{2})  \nonumber   \\ & = -Tr(S_{1}) - Tr(S_{2}) + \lambda_{max}(S_{1} + S_{2})  \nonumber \\ & = -(p-1) (V_{p}^{\varepsilon})^{-1}(t_{p}, x_{p}) | \mathcal{X}_{\varepsilon} \phi (t_{p}, x_{p})|^{2}\nonumber  \\ &- \Delta_{\varepsilon} \phi(t_{p}, x_{p}) + \lambda_{max}(S_{1} + S_{2})
\end{align}
since the trace operator is linear and $Tr((\mathcal{X}_{\varepsilon} \phi(x_{p}))( \mathcal{X}_{\varepsilon} \phi (x_{p}))^{T} = | \mathcal{X}_{\varepsilon} \phi (x_{p})|^{2}$.
Now we use the Lemma \ref{c6noo} in order to expand the $\lambda_{max}$. We consider the matrix
\begin{equation*}
S= \frac{\mathcal{X}_{\varepsilon} \phi(t,x) (\mathcal{X}_{\varepsilon} \phi(t,x))^{T}}{V^{\varepsilon}(t,x)}
\end{equation*}
for which $\lambda_{max}(S)= \frac{|\mathcal{X}_{\varepsilon} \phi (t,x)|^{2}}{V^{\varepsilon}(t,x)}$ and where $a= \frac{\mathcal{X}_{\varepsilon}  \phi(t,x)}{|\mathcal{X}_{\varepsilon} \phi(t,x)|}$ since $\mathcal{X}_{\varepsilon} \phi(t,x) \neq 0$ (see \cite{car} for further remarks). Let us consider
\begin{equation*}
S_{p} = \frac{(\mathcal{X}_{\varepsilon} \phi (t_{p}, x_{p}))(\mathcal{X}_{\varepsilon} \phi (t_{p}, x_{p}))^{T}}{V^{\varepsilon}_{p}(t_{p},x_{p})},
\end{equation*}
it is immediate to observe that $S_{p}$ converges to $S$ as $p \rightarrow \infty$. By Taylor's formula we know that there exists a $\theta_{p} \in (0,1)$ such that
\begin{align*}
\lambda_{max} & \bigg( S_{p} + \frac{(\mathcal{X}_{\varepsilon}^{2} \phi)^{*} (t_{p} , x_{p})}{p-1} \bigg) = \lambda_{max}(S_{p}) \nonumber \\ & + \frac{1}{p-1} D \lambda_{max} \bigg( S_{p} + \frac{\theta_{p}}{p-1} (\mathcal{X}^{2}_{\varepsilon} \phi)^{*}(t_{p}, x_{p}) \bigg) (\mathcal{X}^{2}_{\varepsilon} \phi )^{*}(t_{p}, x_{p}).
\end{align*}
Using the fact that $\lambda_{max}$ is $C^{1}$ in a neighbourhood of $S$ and $S_{p} \rightarrow S$ to get 
\begin{align*}
\lambda_{max} &  \bigg( S_{p} + \frac{(\mathcal{X}_{\varepsilon}^{2}\phi)^{*} (t_{p}, x_{p})}{p-1} \bigg) = \lambda_{max}(S_{p}) \nonumber  \\ &+ \frac{1}{p-1}D \lambda_{max}(S)(\mathcal{X}^{2}_{\varepsilon} \phi )^{*} (t_{p}, x_{p}) + o\left( \frac{1}{p} \right)
\end{align*}
where $p o(1/p) \rightarrow 0$ when $p \rightarrow \infty$. As consequence we obtain
\begin{align*}
\lambda_{max}& \bigg( S_{p} + \frac{(\mathcal{X}^{2}_{\varepsilon} \phi )^{*} (t_{p}, x_{p})}{p-1} \bigg) \nonumber \\ & = \lambda_{max}(S_{p}) + \frac{< (\mathcal{X}^{2}_{\varepsilon} \phi )^{*} (t_{p},x_{p}) \mathcal{X}_{\varepsilon} \phi(t,x) , \mathcal{X}_{\varepsilon} \phi(t,x)>}{(p-1) | (\mathcal{X}_{\varepsilon} \phi)(t,x)|^{2}}
\end{align*}
then, expanding the $p$-Hamiltonian \eqref{5k} we obtain immediately the inequality,
If $\mathcal{X}_{\varepsilon} \phi(t,x)=0$ then we use the subadditivity of $S \rightarrow \lambda_{max}(S)$ and remark that, since $V_{p}^{\varepsilon}$ is supersolution
\begin{align*}
0  &\leq - \phi_{t} + H_{\varepsilon}(x_{p}, D\phi, (p-1) (V_{p}^{\varepsilon})^{-1} D \phi (D \phi)^{T} + D^{2} \phi) \nonumber \\ &\leq  - \phi_{t} - (p-1)(V_{p}^{\varepsilon})^{-1}| \mathcal{X}_{\varepsilon} \phi|^{2} - Tr( (\mathcal{X}^{2}_{\varepsilon} \phi )^{*})   \nonumber \\ &+ \lambda_{max}((p-1) (V^{\varepsilon}_{p})^{-1} \mathcal{X}_{\varepsilon} \phi ( \mathcal{X}_{\varepsilon} \phi)^{T} + (\mathcal{X}_{\varepsilon}^{2} \phi)^{*}) \nonumber \\ &\leq - \phi_{t} - (p-1)(V_{p}^{\varepsilon})^{-1}| \mathcal{X}_{\varepsilon} \phi|^{2} - Tr( (\mathcal{X}^{2}_{\varepsilon} \phi )^{*})   \nonumber \\ &+ (p-1) (V^{\varepsilon}_{p})^{-1} |\mathcal{X}_{\varepsilon} \phi|^{2} + \lambda_{max}(\mathcal{X}_{\varepsilon}^{2} \phi)^{*} \nonumber \\ &= - \phi_{t} - Tr((\mathcal{X}^{2}_{\varepsilon} \phi)^{*}) + \lambda_{max}(\mathcal{X}^{2}_{\varepsilon} \phi)^{*}.
\end{align*}
In the end, we can conclude now that $V^{\varepsilon}$ is a supersolution.
\item \emph{$V^{*, \varepsilon}$ is the subsolution}: As consequence of Lemma \ref{c6say} we can write $V^{*, \varepsilon} = V^{\sharp , \varepsilon}$. Let $\phi \in C^{1}([0,T]; C^{2}(\mathbb{R}^{N}))$ such that $V^{\sharp, \varepsilon} - \phi$ has a strict maximum at $(t_{0}, x_{0})$. Let us consider a sequence of maximum points of $V_{p}^{\varepsilon} - \phi$, we can find a subsequence converging to $(t,x)$. Since $V_{p}^{\varepsilon}$ is the solution of
\begin{equation} \label{c6mis}
\begin{cases} -(V_{p})_{t} + H_{\varepsilon}(x , DV_{p}^{\varepsilon}, (p-1) (V_{p}^{\varepsilon})^{-1} DV_{p}^{\varepsilon} (DV_{p}^{\varepsilon})^{T} + D^{2}V_{p}^{\varepsilon}) = 0 \\  \ \ \ \ \ \ \ \ \ \ \ \ \ \ \ \ \ \ \ \ \ \ \ \ \ \ \ \ \ \ \ \   \ \ \ \ \ \ \ \ \ \ \ \  \ \  \ \ \ \ \ \ \  \ \ \ \ \ x \in \mathbb{R}^{N}, \ t \in [0,T), \\ \ \ \ \ \  \ \  \ \ \ \ \ \  \ \ \ \ \ \ \ \ \ \ \ \ \ \ \ \ \ \ \ \ \ \ \ \ \ \ \ \ \ \   V^{\varepsilon}_{p}(T,x) = g(x), \ \ \ \ \  x \in \mathbb{R}^{N} \end{cases}
\end{equation}
then we have that
\begin{equation}\label{5p}
0 \leq - \phi_{t} + H_{\varepsilon}(x, (p-1) (V_{p}^{\varepsilon})^{-1} D \phi (D \phi)^{T} + D^{2} \phi )
\end{equation}
at the point $(t_{p}, x_{p})$. We define for any $z>0$, $x, d \in \mathbb{R}^{N}$and any $N \times N$ symmetric matrix $S$
\begin{align*}
H^{\varepsilon}_{p}(x,z,d,S) = - \frac{(p-1)}{z} | \sigma_{\varepsilon}(x) d |^{2} - Tr( \sigma^{T}_{\varepsilon}(x) S \sigma_{\varepsilon}(x) + A_{\varepsilon}(x,d)) \nonumber \\ \lambda_{max} \bigg( \frac{(p-1)}{z} (\sigma_{\varepsilon}(x) d) (\sigma_{\varepsilon}(x) d)^{T} + \sigma_{\varepsilon}^{T}(x)S \sigma_{\varepsilon}(x) + A_{\varepsilon}(x,d) \bigg)
\end{align*}
and
\begin{align*}
(H_{\varepsilon})^{*}(x,d,S) = \begin{cases} - Tr( \sigma_{\varepsilon}^{T}(x)S \sigma_{\varepsilon}(x) + A_{\varepsilon}(x,d)) \\  \ \ \  \ \ \ \ \  + \bigg< (\sigma^{T}_{\varepsilon}(x) S \sigma_{\varepsilon}(x) + A_{\varepsilon}(x,d)) \frac{\sigma_{\varepsilon}(x) d}{|\sigma_{\varepsilon}(x) d|}, \frac{\sigma_{\varepsilon}(x) d}{|\sigma_{\varepsilon}(x) d|} \bigg>, \\ \ \ \ \ \ \ \ \ \ \ \ \ \ \ \ \ \ \ \ \ \ \ \ \ \ \  \ \ \ \ \ \ \ \ \ \ \ \ \ \ \ \ \ \ \  \ \ \ \ \ \ \ \ \ \ \ \ \ \ \ \ \ |d| \neq 0, \\ - Tr( \sigma_{\varepsilon}^{T}(x)S \sigma_{\varepsilon}(x) + A_{\varepsilon}(x,d)) \\  \ \ \  \ \ \ \ \  + \lambda_{max} (\sigma^{T}_{\varepsilon}(x) S \sigma_{\varepsilon}(x) + A_{\varepsilon}(x,d)), \ \ \ \ \ \ \ \ \ \  |d|=0 \end{cases}
\end{align*}
and, as stated in \cite{dirr}, we can observe that
\begin{equation*}
H^{\varepsilon}_{p}(x,z,d,S) \geq (H^{\varepsilon})^{*}(x,d,S).
\end{equation*}
We remark that for $|d|=0$ is immediate, for $|d| \neq 0$ we observe that
\begin{align*}
\lambda_{max} \bigg( \frac{(p-1)}{z} (\sigma_{\varepsilon}(x) d) (\sigma_{\varepsilon}(x) d)^{T} + \sigma_{\varepsilon}^{T}(x)S \sigma_{\varepsilon}(x) + A_{\varepsilon}(x,p) \bigg) \nonumber \\  \geq  \frac{(p-1)}{z} |\sigma_{\varepsilon}(x) d|^{2} + \lambda_{max} ( \sigma_{\varepsilon}^{T}(x)S \sigma_{\varepsilon}(x) + A_{\varepsilon}(x,p) )
\end{align*}
and, called $\overline{S}_{\varepsilon}= \sigma_{\varepsilon}^{T}(x)S \sigma_{\varepsilon}(x) + A_{\varepsilon}(x,p)$ 
\begin{equation*}
\lambda_{max}(\overline{S}_{\varepsilon}) = \max_{|a|=1} <\overline{S}_{\varepsilon} a , a>
\end{equation*}
we obtain immediately the inequality. Let us consider $\varepsilon >0$, set $z= \phi^{-1}(t_{p}, x_{p})>0$, $d=D \phi (t_{p} , x_{p})$, $S= D^{2} \phi(t_{p}, x_{p})$, then taking the limsup of \eqref{5p} we obtain for $p \rightarrow \infty$ and recalling that, by definition, $(H_{\varepsilon})^{*} \geq (H_{\varepsilon})_{*}$ we obtain
\begin{equation*}
0 \geq \phi_{t} + (H_{\varepsilon})_{*}(x, D\phi ,D^{2}\phi)
\end{equation*}
at $(t,x)$. The result follows immediately.
\end{itemize}
\end{proof}
Now, in order to prove the main theorem of this section, we need a further lemma.
\begin{lemma} \label{5l}
Let us consider $0<\varepsilon <1$ fixed. For any $x \in \mathbb{R}^{N}$, $V^{\varepsilon, \sharp}(T,x) \leq g(x)$.
\end{lemma}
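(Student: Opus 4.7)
The strategy is to use Lemmas \ref{c6say} and \ref{c6ca} to reduce the claim to a direct upper bound on $V^{\varepsilon}_p(s,y)$ as $(s,y,p)$ approaches $(T,x,\infty)$, and then to obtain such a bound by testing the infimum defining $V^{\varepsilon}_p$ against an explicit constant admissible control, combining the Lipschitz and boundedness hypotheses on $g$ with a concentration estimate for the SDE \eqref{kkk1f}.

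By Lemma \ref{c6say} we may work with $V^{\varepsilon,\sharp}(T,x)=V^{\varepsilon,*}(T,x)$, and since Lemma \ref{4b} is invariant under the composition of $g$ with a strictly increasing continuous map, we may assume $g\geq 0$. Fix any constant $\nu_1^*\in\mathcal{A}_1$, for instance the extreme point $(\nu_1^*)^2=I_N-e_1 e_1^T$ supplied by the convex representation of $\mathcal{A}_1$ given just after \eqref{celfudra}. The infimum in the definition \eqref{c6bar} immediately yields
\[
V^{\varepsilon}_p(s,y)\leq \mathbb{E}\bigl[g(\xi^{s,y,\nu_1^*}_\varepsilon(T))^p\bigr]^{1/p}.
\]
Given $\eta>0$, uniform continuity of $g$ furnishes $\rho>0$ such that $g(z)\leq g(x)+\eta/2$ whenever $|z-x|\leq\rho$; setting $M:=\|g\|_\infty$ and splitting the expectation according to whether $\xi^{s,y,\nu_1^*}_\varepsilon(T)$ lies in $B(x,\rho)$ or not produces
\[
\mathbb{E}\bigl[g(\xi^{s,y,\nu_1^*}_\varepsilon(T))^p\bigr]\leq (g(x)+\eta/2)^p + M^p\,\mathbb{P}\bigl(|\xi^{s,y,\nu_1^*}_\varepsilon(T)-x|>\rho\bigr).
\]

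Under the Lipschitz hypotheses of Theorem \ref{c6wee}, standard Burkholder--Davis--Gundy and Gronwall estimates for the SDE \eqref{kkk1f} give a Gaussian-type concentration bound of the form $\mathbb{P}(|\xi^{s,y,\nu_1^*}_\varepsilon(T)-y|>r)\leq C_0 \exp(-c_0 r^2/(T-s))$, with constants depending only on $\varepsilon$ and the Lipschitz constants of $\sigma_\varepsilon\nu_1^*$ and $\nu_\varepsilon$. For $|y-x|<\rho/2$, the tail term is thus bounded by $C_0\exp(-c_0\rho^2/(4(T-s)))$. Taking $p$-th roots, letting first $p\to\infty$ at fixed $(s,y)$ (so that by Lemma \ref{c6ca} we recover $V^{\varepsilon}(s,y)$), and then $(s,y)\to(T,x)$, gives $V^{\varepsilon,*}(T,x)\leq g(x)+\eta/2$; sending $\eta\downarrow 0$ concludes.

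The main obstacle is coordinating the two limits. The ``bad'' contribution $M^p\exp(-c_0\rho^2/(T-s))$ dominates the principal term $(g(x)+\eta/2)^p$ whenever $p(T-s)\gtrsim c_0\rho^2/\log(M/(g(x)+\eta/2))$, so a naive joint limit does not suffice. The sequential structure granted by Lemmas \ref{c6say} and \ref{c6ca} (perform the inner $p\to\infty$ first, producing $V^{\varepsilon}(s,y)\leq M<\infty$, and only then the outer $(s,y)\to(T,x)$) is what makes the argument work: the radius $\rho$ can be chosen depending only on $\eta$ and on the modulus of continuity of $g$, independently of $p$, after which the Gaussian factor vanishes as $T-s\to 0$. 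Care must also be taken that the WLOG reduction to $g\geq 0$ via Lemma \ref{4b} actually preserves the final inequality, which follows from the commutation $\phi(V^{\varepsilon,\sharp})=(\phi\circ V^{\varepsilon})^{\sharp}$ for increasing continuous $\phi$.
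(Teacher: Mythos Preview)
Your argument has a genuine gap at precisely the point where you appeal to the ``sequential structure'' of Lemmas \ref{c6say} and \ref{c6ca}. After the splitting you obtain
\[
V^{\varepsilon}_p(s,y)\;\leq\;\bigl[(g(x)+\eta/2)^p + M^p\,q(s,y)\bigr]^{1/p},
\qquad q(s,y):=\mathbb{P}\bigl(|\xi^{s,y,\nu_1^*}_\varepsilon(T)-x|>\rho\bigr),
\]
and you then send $p\to\infty$ with $(s,y)$ fixed, $s<T$. But for any such $(s,y)$ the diffusion driven by $\nu_1^*$ is genuinely stochastic on $[s,T]$, so $q(s,y)>0$; and for any fixed $q>0$ one has $\bigl(a^p+b^p q\bigr)^{1/p}\to\max(a,b)$ as $p\to\infty$. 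Hence, as soon as $M>g(x)+\eta/2$, the right-hand side converges to $M$, not to $g(x)+\eta/2$. The inner limit therefore yields only the trivial bound $V^{\varepsilon}(s,y)\leq M$, and the subsequent $\limsup_{(s,y)\to(T,x)}$ gives nothing better than $V^{\varepsilon,*}(T,x)\leq M$. The decay of the Gaussian factor as $T-s\to0$ is irrelevant once $p$ has already gone to infinity: the order ``$p\to\infty$ first, then $(s,y)\to(T,x)$'' is exactly the one in which the obstruction you yourself diagnose is fatal, not the one that cures it. A probabilistic route of this type would require either a control under which the \emph{support} of $\xi^{s,y,\nu_1}_\varepsilon(T)$ is contained in $B(x,\rho)$ (impossible here, since every admissible $\nu_1$ has rank at least $N-1$), or a diagonal argument tying $p_k$ to $T-s_k$ so that $p_k(T-s_k)\to0$, together with a justification that such sequences suffice to compute $V^{\varepsilon,\sharp}(T,x)$; neither is supplied.

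The paper's proof takes a completely different, PDE-based route and avoids the stochastic representation altogether. It argues by contradiction: assuming $V^{\varepsilon,\sharp}(T,x_0)\geq g(x_0)+\delta$, it tests against the barrier $\phi(t,x)=\alpha(T-t)+\beta|x-x_0|^2+g(x_0)+\delta/2$, extracts a sequence of local maxima $(s_k,y_k)$ of $V^{\varepsilon}_{p_k}-\phi$ lying in a fixed compact $K$, and uses that each $V^{\varepsilon}_{p_k}$ is a viscosity subsolution of \eqref{c6mis} to obtain $0\geq\alpha+C\beta$ with $C$ depending only on $\sigma_\varepsilon$ and $A_\varepsilon$ on $K$. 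Choosing $\alpha>-C\beta$ produces the contradiction. No concentration estimate and no coordination of $p$ with $T-s$ are needed.
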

\begin{proof}
By contradiction, we assume that it is not true and that there exists a point $x_{0}$ such that $V^{\varepsilon , \sharp}(T,x) \geq g(x_{0}) + \delta$, for $\delta >0$ sufficiently small. We use as test function
\begin{equation*}
\phi(t,x) = \alpha(T-t) + \beta |x - x_{0}|^{2} + g(x_{0}) + \frac{\delta}{2}
\end{equation*}
with $\alpha > -C \beta$, with $C$ a constant depending just on the data of the problem and the point $x_{0}$ and $\beta>1$ sufficiently large.
We remark that
\begin{equation*}
\phi_{t}(t,x) = \alpha , \ \ \ \  D\phi(t,x)= 2 \beta (x-x_{0}) , \ \ \ \ D^{2} \phi (t,x) = 2 \beta Id.
\end{equation*}
We can find a sequence $(t_{k} , x_{k}) \rightarrow (T, x_{0})$ and $p_{k} \rightarrow \infty$ as $k \rightarrow \infty$ such that $V^{\varepsilon}_{p_{k}} - \phi$ has a positive local maximum at some point $(s_{k} , y_{k})$, for any $k>1$. To obtain the contradiction we use the fact that $V^{\varepsilon}_{p_{k}}$ is solution of the Equation (\ref{c6mis}) in order to obtain $\alpha + C \beta \leq 0$. We observe that the functions $V^{\varepsilon}_{p}$ are bounded uniformly in $p$ and $\varepsilon$ is fixed so, by the growth of $|x-x_{0}|$, the maximum points are such that $y_{k} \in \overline{B_{R}(x_{0})}=: K$ with $R$ independent of $k$. In the point $(s_{k}, y_{k})$ it holds true
\begin{align*}
0 & \geq \alpha - H_{\varepsilon}(y_{k}, (p-1) \phi^{-1} D \phi (D \phi)^{T} + D^{2} \phi) \nonumber \\ & \geq \alpha - 2 \beta Tr (\sigma_{\varepsilon}(y_{k}) \sigma^{T}_{\varepsilon}(y_{k}) + A_{\varepsilon}(y_{k}, y_{k} - x_{0})) \nonumber \\ &+ 2 \beta \lambda_{min} (\sigma_{\varepsilon}(y_{k}) \sigma^{T}_{\varepsilon}(y_{k}) + A_{\varepsilon}(y_{k}, y_{k} - x_{0})).
\end{align*} 
Then recalling that there is a compact set $K$ such that  $y_{k} \in K$ for all $k$, by continuity, we get $0 \geq \alpha + C \beta$, with
\begin{align*}
C&= - \max_{x \in K} Tr(\sigma_{\varepsilon}(x) \sigma_{\varepsilon}^{T}(x)) - \max_{x \in K} A_{\varepsilon}(x, x-x_{0}) \nonumber \\ &+ \min_{k \in K}\lambda_{min}(\sigma_{\varepsilon}(x) \sigma_{\varepsilon}^{T}(x)) + \min_{x \in K} \lambda_{min}(A_{\varepsilon}(x, x-x_{0}))
\end{align*}
with such estimate we obtain the contradiction, i.e. the thesis.
\end{proof}
\begin{corollary}
Let us consider $0<\varepsilon<1$ fixed. Let $g: \mathbb{R}^{N} \rightarrow \mathbb{R}$ be bounded and H\"{o}lder continuous, $T>0$ and $\sigma_{\varepsilon}(x)$ a $N \times N$-H\"{o}rmander matrix like in Theorem \ref{c6wee}. Since the comparison principle holds (see \cite{az}), then the value function $V^{\varepsilon}(t,x)$ is the unique continuous viscosity solution of the level set equation \eqref{hver}, satisfying $V^{\varepsilon}(T,x)=g(x)$.
\end{corollary}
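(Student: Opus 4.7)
The plan is to reduce the Hölder case to the Lipschitz case handled by the preceding theorem, combining a mollification argument with the assumed comparison principle.

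\textbf{Step 1: Lipschitz approximation of $g$.} I would take a standard mollification $g_n := g \ast \rho_n$ with a smooth, compactly supported kernel. Since $g$ is bounded and Hölder continuous with some exponent $\alpha \in (0,1]$, each $g_n$ is bounded and globally Lipschitz, and
\[
\|g_n - g\|_\infty \leq C n^{-\alpha} \to 0.
\]
For each $n$, the hypotheses of Theorem \ref{c6wee} (together with the comparison principle from \cite{az}) are met with $g_n$ as terminal cost, so the associated value function
\[
V^{\varepsilon}_n(t,x) := \inf_{\nu_1 \in \mathcal{A}_1} \mathrm{ess}\sup_{\omega \in \Omega} g_n\bigl(\xi_\varepsilon^{t,x,\nu_1}(T)(\omega)\bigr)
\]
is the unique bounded continuous viscosity solution of \eqref{hver} with terminal datum $g_n$.

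\textbf{Step 2: Uniform convergence and identification.} Since any constant passes through both $\inf_{\nu_1 \in \mathcal{A}_1}$ and $\mathrm{ess}\sup_{\omega}$, applying Lemma \ref{4a} to the pointwise bounds $g_n - \|g_n - g_m\|_\infty \leq g_m \leq g_n + \|g_n - g_m\|_\infty$ yields
\[
\|V^{\varepsilon}_n - V^{\varepsilon}_m\|_\infty \leq \|g_n - g_m\|_\infty.
\]
Hence $\{V^{\varepsilon}_n\}$ is Cauchy in $L^\infty([0,T] \times \mathbb{R}^N)$ and converges uniformly to some continuous function $\widetilde{V}$. The same monotonicity applied directly between $g_n$ and $g$ gives $\|V^{\varepsilon}_n - V^{\varepsilon}\|_\infty \leq \|g_n - g\|_\infty \to 0$, so $V^{\varepsilon} = \widetilde{V}$; in particular $V^{\varepsilon}$ is continuous on $[0,T]\times\mathbb{R}^N$ and $V^{\varepsilon}(T,\cdot) = g$.

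\textbf{Step 3: Viscosity solution and uniqueness.} By the Barles--Perthame type stability theorem for discontinuous viscosity solutions, applied with the envelopes $(F_\varepsilon)^*$ and $(F_\varepsilon)_*$ already introduced after \eqref{pppp}, the uniform limit of the viscosity solutions $V^{\varepsilon}_n$ is again a viscosity solution of \eqref{hver}; hence $V^{\varepsilon}$ is a bounded continuous viscosity solution with terminal condition $g$. Uniqueness in this class follows immediately from the comparison principle quoted from \cite{az}.

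The main obstacle is the stability step: the Hamiltonian $F_\varepsilon$ is discontinuous on $\{\sigma_\varepsilon(x)p = 0\}$, so one has to invoke the discontinuous version of stability rather than the classical one. However this singular set has already been dealt with in the proof of Theorem \ref{c6wee} via the semicontinuous envelopes, and the standard stability argument transfers verbatim in that framework. Steps 1 and 2 are routine once Lemma \ref{4a} is in hand.
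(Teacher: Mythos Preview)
Your proof is correct but takes a genuinely different route from the paper. The paper does not mollify $g$: it simply observes that the subsolution property for $V^{\varepsilon,*}=V^{\varepsilon,\sharp}$ and the supersolution property for $V^{\varepsilon}=V^{\varepsilon}_{*}$, established in the preceding theorem via the $p$-approximations $V^{\varepsilon}_{p}$, together with Lemma~\ref{5l} giving $V^{\varepsilon,\sharp}(T,\cdot)\leq g$, continue to hold when $g$ is only H\"older. It then applies the PDE comparison principle of \cite{az} to the pair $(V^{\varepsilon,\sharp},V^{\varepsilon})$ to obtain $V^{\varepsilon,\sharp}\leq V^{\varepsilon}$; since the reverse inequality is automatic, $V^{\varepsilon}$ is continuous, and uniqueness follows at once from the same comparison.

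Your argument is more modular: by reducing to the Lipschitz case via Lemma~\ref{4a} and uniform convergence, you never need to re-examine whether the sub/super proofs, Lemma~\ref{c6ca}, or Lemma~\ref{5l} actually use the Lipschitz hypothesis on $g$. The price is the appeal to Barles--Perthame stability in the discontinuous-Hamiltonian framework, which is standard but external. The paper's route is shorter and self-contained, but rests on the (implicit and unverified) claim that none of the intermediate lemmas genuinely require Lipschitz continuity of $g$.
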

\begin{proof}
We have already shown that $V^{\varepsilon, *}(t,x)=V^{\varepsilon, \#}(t,x)$ is a viscosity subsolution while $V_{*}^{\varepsilon}(t,x)=V^{\varepsilon}(t,x)$ is a viscosity supersolution of \eqref{hver} with initial condition $g$. For Lemma \ref{5l} we know that $V^{\varepsilon, \#}(t,x)\leq g(x)$ and $V(T,x)=g(x)$ so, by comparison principle, it holds $V^{\varepsilon, \#}(t,x)\leq V^{\varepsilon}(t,x)$. By definition of $\limsup$ we have $V^{\varepsilon, \#}(t,x)\geq V^{\varepsilon}(t,x)$ i.e. $V^{\varepsilon}(t,x)$ is upper semicontinuous. Since $V^{\varepsilon}(t,x)$ is also lower semicontinuous we can conclude immediately stating that $V^{\varepsilon}(t,x)$ is continuous.
\end{proof}

\end{document}